\theoremstyle{plain}
\newtheorem{tw}{Theorem}[section]
\newtheorem {lem} [tw]{Lemma}
\newtheorem {prop}[tw] {Proposition}
\newtheorem{cor}[tw]{Corollary}
\theoremstyle{definition}
\newtheorem {deft}[tw] {Definition}
\newtheorem {rem} [tw]{Remark}
\newtheorem {remark} [tw]{Remark}
\newcommand{\bc} {\Bbb C}
\newcommand{\bn}{\Bbb N}
\newcommand{\Com}{\Delta}
\newcommand{\Cou}{\epsilon}
\newcommand{\alg} {\mathsf{A}}
\newcommand {\id} {{\textrm{id}}}
\newcommand{\mlg} {\mathsf{M}}
\newcommand{\blg} {\mathsf{B}}
\newcommand{\nlg} {\mathsf{N}}
\newcommand{\wlg} {\mathsf{W}}
\newcommand{\jlg} {\mathsf{J}}
\newcommand{\QPn} {C(\mathbb{S}_n)}
\newcommand{\QPm} {C(\mathbb{S}_{n+1})}
\newcommand{\ilg} {\mathsf{I}}
\newcommand{\mclass}{\mathfrak{M}}
\newcommand{\Cclass}{\mathfrak{C}}
\newcommand{\Wclass}{\mathfrak{W}}
\newcommand{\QA}{\mathbb{S}}
\newcommand{\tu}{\textup}
\newcommand{\Hil}{\mathsf{H}}
\newcommand{\hil}{\mathsf{h}}
\newcommand{\Ker}{\tu{Ker}}
\newcommand{\Proj}{\mathcal{P}}
\newcommand{\ot}{\otimes}
\newcommand{\wot}{\overline{\otimes}}
\newcommand{\wt}{\widetilde}
\numberwithin{equation}{section}
\keywords{Quantum  semigroups, quantum permutation groups, von Neumann algebras, projective limit}
\subjclass[2000]{ Primary 81R50, Secondary 37B40}
\dedicatory{Dedicated to Stanis\l aw Lech Woronowicz on the occasion of his 70th birthday}
\begin{document}

\author{Debashish Goswami}
\address{Stat-Math Unit, Indian Statistical Institute, 203, B.\ T.\ Road, Kolkata 700 208, India} \email{goswamid@isical.ac.in}
\author{Adam Skalski}
\address{  Institute of Mathematics of the Polish Academy of Sciences,
ul.\'Sniadeckich 8, 00-956 Warszawa, Poland} \email{a.skalski@impan.pl}

\begin{abstract}
Two different models for a Hopf--von Neumann algebra of bounded functions on the quantum  semigroup of all (quantum) permutations of infinitely many elements are
proposed, one based on projective limits of enveloping von Neumann algebras related to finite quantum permutation groups, and the
second on a universal property with respect to infinite magic unitaries.
\end{abstract}

\title{\bf  On two possible constructions of the quantum
semigroup of all quantum permutations of an infinite countable set}

\maketitle


Classical groups first entered mathematics as collections of all symmetries of a given object, be it a finite set, a polygon, a
metric space or a manifold. Original definitions of quantum groups (also in the topological context, see \cite{woronowicz98} and
\cite{kuv}) had rather algebraic character. Recent years however have brought many developments in the theory of quantum symmetry
groups, i.e.\ quantum groups defined as universal objects acting (in the sense of quantum group actions) on a given structure.
The first examples of that type were introduced in \cite{Wang}, where S.\,Wang defined the quantum group of permutations of a
finite set, $\QA_n$. It turns out that the $C^*$-algebra of `continuous functions on a quantum permutation group of $n$ elements',
$C(\QA_n)$, is generated by entries of a universal $n$ by $n$ magic unitary, i.e.\ a unitary matrix whose entries are orthogonal
projections. Later the theory was extended to quantum symmetry groups of finite graphs (\cite{graph}), finite metric spaces
(\cite{metric}), $C^*$-algebras equipped with orthogonal filtrations (\cite{orthfiltr}), and quantum isometry groups of compact noncommutative manifolds (\cite{Deb}). In all these cases the
structure whose (quantum) symmetries are studied has finite or compact flavour, so that the resulting quantum symmetry group is
compact.

In this paper we study possible definitions of the quantum permutation (semi)group of an infinite countable set. Even in the classical
context there is a natural choice here  -- we can either consider the group of all permutations of $\bn$, $\tu{Perm}(\bn)$, or the
group of all `finite range' permutations of $\bn$, usually denoted by $S_{\infty}$. From the analytic point of view the second group
arises more naturally, as it is a direct limit of finite permutation groups $S_n$. Hence this will be the group whose quantum
version we want to discuss here.   As on the level of groups we have embeddings $S_n \hookrightarrow S_{n+1}$, on the level of
algebras we obtain surjective morphisms $C(S_{n+1}) \twoheadrightarrow C(S_{n})$. Therefore it is natural to expect that the algebra
of continuous functions on the quantum version of $S_{\infty}$ will arise as the inverse (projective) limit of algebras
$C(\QA_n)$  -- note however that the situation here is more complicated than in the classical framework, as quantum groups $\QA_n$ are neither finite nor discrete for $n\geq 4$.
Moreover  projective limits of $C^*$-algebras do not behave well, which is easy to understand even in the commutative
setting: a direct limit of locally compact spaces need not be locally compact. Hence one either needs to consider pro-$C^*$-algebras, as suggested in a slightly different context in a recent paper
(\cite{Varghese}), or, as we do here, work with von Neumann algebras. Precisely speaking,
we construct in this note the algebra $\wlg_{\infty}$, a candidate for $L^{\infty}(\QA_{\infty})$, as the limit of
enveloping von Neumann algebras of $C(\QA_n)$ and  study its universal properties. Another possible approach to infinite
quantum permutation groups exploits the fact that the algebras $C(\QA_n)$ are defined in terms of universal magic unitaries, so
by analogy one can investigate a universal von Neumann algebra generated by entries of an infinite magic unitary. We show that
such an algebra exists and is a proper subalgebra of $\wlg_{\infty}$. In both cases the algebras in question come equipped with a natural comultiplication.
We do not know if either of the resulting Hopf--von Neumann algebras fits into the theory of locally compact quantum groups developed in \cite{kuv}; they admit (bounded) antipodes, but the existence of invariant weights is not known.

The detailed plan of the paper is as follows: in Section 1 we discuss projective limits of von Neumann algebras; although these
results are not difficult and can be deduced from the corresponding statements for Banach spaces (\cite{Semadeni}), we could not locate a specific reference to the von Neumann algebra setting, where the explicit structure of the projective limit is easier to see (and will be used in Section 3 of the paper). We also include several
lemmas on extending maps to the projective limits. A short Section 2 contains applications of these results to projective limits
of Hopf-von Neumann algebras. In Section 3 we recall basic facts on Wang's quantum permutation groups and describe the first of
two possible candidates for the algebra $L^{\infty}(\QA_{\infty})$, constructed as the projective limit of the enveloping von
Neumann algebras of $C(\QA_n)$. In Section 4 we propose an alternative approach in terms of a universal `infinite magic unitary'
and explain why this leads to a different Hopf--von Neumann algebra.

The spatial tensor of $C^*$-algebras will be denoted $\ot$, and the ultraweak tensor product of von Neumann algebras  $\wot$. For
 a von Neumann algebra $\mlg$ its lattice of projections will be denoted $\Proj(\mlg)$ and the central carrier of
$p\in \Proj(\mlg)$ (i.e.\ the smallest projection in $Z(\mlg)$ dominating $p$) will be denoted $z(p)$.

\section{Projective limits of von Neumann algebras}

In this section we define, establish existence and prove basic properties of projective  limits of von Neumann algebras. The statements and the ideas of proofs follow the pattern established for example in \cite{Semadeni}, but the nature of the weak$^*$-closed ideals in von Neumann algebras make it possible to describe the resulting structures explicitly. Although
the theorems remain valid for general directed index sets, we consider only projective systems indexed by $\bn$. Note that several categorical theorems related to von Neumann algebras (with main focus on the abstract properties of the tensoring procedure, but also describing for example inductive limit constructions) can be found in \cite{Guichardet}.

\newpage

\begin{deft} \label{classes}
A sequence  $(\mlg_n)_{n \in \bn}$ is a \emph{projective system of  von Neumann algebras} if it is a sequence of von Neumann
algebras equipped with surjective normal $^*$-homomorphisms $\phi_n:\mlg_{n+1} \to \mlg_n$ (the maps $\phi_n$ form a part of the
definition, but we omit them from the notation). Define the following class of von Neumann algebras: $\mclass=\{\mlg: \forall_{n
\in \bn} \;\exists\; \psi_n:\mlg \to \mlg_n$, a surjective normal  $^*$-homomorphism such that $\psi_n = \phi_n \circ
\psi_{n+1}\}$. We say that $\mlg \in \mclass$ is a final object (in other words a colimit) for $\mclass$ if for each $\nlg \in \mclass$ there exists a
surjective normal morphism $\psi: \nlg \to \mlg$ such that $\psi^{(\mlg)}_{n} \circ \psi= \psi^{(\nlg)}_n$ for all $n \in \bn$.
\end{deft}

Note that it is not clear at the moment whether even if a final object for $\mclass$ exists, it is unique.

\begin{tw} \label{projective}
Let $(\mlg_n)_{n \in \bn}$ be a projective system of von Neumann algebras. Then the class $\mclass$ admits a (unique) final
object.
\end{tw}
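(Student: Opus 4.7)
The plan is to construct $\mlg_\infty$ explicitly as the algebra of compatible sequences in $\prod_n \mlg_n$ and then establish its universal property via a Hahn--Banach argument at the level of preduals, in the spirit of \cite{Semadeni}. Concretely, I would put
\[
\mlg_\infty := \Bigl\{(x_n)_n \in \prod_{n \in \bn} \mlg_n : \phi_n(x_{n+1}) = x_n \text{ for all } n \in \bn\Bigr\},
\]
viewed inside the von Neumann algebraic $\ell^\infty$-product. Normality of each $\phi_n$ forces $\mlg_\infty$ to be a weak-$^*$-closed unital $^*$-subalgebra, hence a von Neumann algebra. Taking $\psi_n^{(\mlg_\infty)}$ to be the $n$-th coordinate projection, the compatibility $\phi_n \circ \psi_{n+1}^{(\mlg_\infty)} = \psi_n^{(\mlg_\infty)}$ is automatic, and surjectivity is verified by lifting: push a chosen $x \in \mlg_n$ downward through the finitely many $\phi_k$ for $k<n$, and upward by iteratively choosing preimages under the surjective $\phi_m$'s for $m \geq n$. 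Thus $\mlg_\infty$ belongs to $\mclass$.

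For the universal property, given any $\nlg \in \mclass$ the natural candidate is $\psi: \nlg \to \mlg_\infty$ defined by $\psi(y) := (\psi_n^{(\nlg)}(y))_n$; the compatibility relations for $\nlg$ place $\psi(y)$ inside $\mlg_\infty$, and the intertwining $\psi_n^{(\mlg_\infty)} \circ \psi = \psi_n^{(\nlg)}$ is tautological. The main obstacle I expect is proving that $\psi$ is surjective, which I would handle by duality. Each normal surjection $\psi_n^{(\nlg)}$ pre-adjoints to an isometric embedding $\mlg_{n,*} \hookrightarrow \nlg_*$, and these are compatible with the natural inclusions $\mlg_{n,*} \hookrightarrow \mlg_{n+1,*}$ dual to $\phi_n$, so they assemble into an isometric embedding of the Banach-space direct limit $X := \overline{\bigcup_n \mlg_{n,*}}$ into $\nlg_*$. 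A short duality check identifies $X^*$ with $\mlg_\infty$: a bounded functional on $X$ restricts on each $\mlg_{n,*}$ to an element $x_n \in \mlg_n$, and the inclusions $\mlg_{n,*} \subseteq \mlg_{n+1,*}$ enforce precisely the compatibility $\phi_n(x_{n+1}) = x_n$. Under this identification $\psi$ becomes the restriction map $\nlg \cong (\nlg_*)^* \to X^*$ dual to $X \hookrightarrow \nlg_*$, which is surjective by Hahn--Banach.

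For uniqueness, if $\mlg'$ is another final object then the universal property supplies surjective normal intertwining morphisms $\alpha: \mlg' \to \mlg_\infty$ and $\beta: \mlg_\infty \to \mlg'$. The composition $\alpha \circ \beta$ is a normal $^*$-endomorphism of $\mlg_\infty$ with $\psi_n^{(\mlg_\infty)} \circ (\alpha \circ \beta) = \psi_n^{(\mlg_\infty)}$, so $(\alpha \circ \beta)(y) - y \in \bigcap_n \ker \psi_n^{(\mlg_\infty)}$ for every $y$; the concrete description makes this intersection $\{0\}$, forcing $\alpha \circ \beta = \id$. Symmetrically $\beta \circ \alpha = \id$, so $\mlg'$ and $\mlg_\infty$ are canonically isomorphic.
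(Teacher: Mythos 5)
Your proof is correct, but it takes a genuinely different route from the paper's. You realize the projective limit as the dual of the Banach-space inductive limit $X=\overline{\bigcup_n (\mlg_n)_*}$ of the preduals (embedded via the pre-adjoints of the quotient maps) and obtain surjectivity of the comparison map $\nlg\to\mlg_\infty$ from Hahn--Banach; this is essentially the Semadeni--Zidenberg argument for dual Banach spaces, specialized to von Neumann algebras. The paper instead exploits the fact that weak$^*$-closed two-sided ideals are of the form $w\mlg$ for central projections $w$: writing $\blg_n=\Ker(\phi_{n-1})=r_n\mlg_n$ it decomposes each $\mlg_n$ as $\oplus_{k=1}^n\blg_k$, shows that for any $\nlg\in\mclass$ the increasing central projections $z_n=w_n^\perp$ (with $\Ker(\psi_n^{(\nlg)})=w_n\nlg$) give $z_\infty\nlg\cong\prod_n\blg_n$, and identifies this corner as the final object. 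Your route is shorter and more general (it works for any system of dual spaces and weak$^*$-continuous metric surjections), but the paper's central-projection bookkeeping is not gratuitous: the explicit isomorphism $\mlg_\infty\cong\prod_n\blg_n$ and the projections $z_n$, $p_n$ are the working tools of Lemmas \ref{iotas} and \ref{homext}, Proposition \ref{antip}, and the comparison of $\wlg_\infty$ with $\alg_\infty$ in Section 4, so a reader of the rest of the paper still needs that structure.

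Two small points to tighten. First, in verifying that $\mlg_\infty\in\mclass$ you lift an element of $\mlg_n$ by ``iteratively choosing preimages'' under the $\phi_m$; arbitrary preimages may have growing norms and then the resulting sequence would leave the $\ell^\infty$-product. You should choose norm-preserving lifts, which exist here because each $\phi_m$ restricts to an isometric isomorphism $r_{m+1}^\perp\mlg_{m+1}\to\mlg_m$ (or by the $C^*$-algebraic fact about isometric lifts of self-adjoint elements). Second, in the uniqueness step the word ``symmetrically'' is not quite right: the argument that $\alpha\circ\beta=\id$ uses $\bigcap_n\Ker\psi_n^{(\mlg_\infty)}=\{0\}$, which is \emph{not} known a priori for the abstract final object $\mlg'$, so you cannot literally repeat it for $\beta\circ\alpha$. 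The conclusion still follows immediately, since $\alpha\circ\beta=\id$ makes $\beta$ injective while the definition of finality already makes it surjective, so $\beta$ is an isomorphism and $\alpha=\beta^{-1}$.
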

\begin{proof}
The construction is based on the properties of weak$^*$-closed two-sided ideals in von Neumann algebras.  Let $n \geq 2$ and
$\ilg_{n}= \Ker (\phi_{n-1})$. Let $r_n \in \Proj(Z(\mlg_{n}))$ be the projection such that $\ilg_{n} = r_{n} \mlg_{n}$ (recall
that $r_n:= \sup \{p \in \Proj(\mlg_{n-1}): \phi_n(p) =0\}$). A well-known (and easy to check) fact states that the map
$\phi_{n-1}|_{r_n^{\perp} \mlg_n}:r_n^{\perp} \mlg_n \to \mlg_{n-1}$ is an isomorphism. Let $\blg_n = r_n \mlg_{n}$ and define
additionally $\blg_1 = \mlg_1$. Then each $\mlg_{n}$ has a natural decomposition of the form $\mlg_{n} = \oplus_{k=1}^n \blg_k$,
and additionally this decomposition is `well behaved' with respect to the maps $\phi_n$. Not surprisingly, the final object in
$\mclass$ will be isomorphic to $\prod_{n=1}^{\infty} \blg_n$. Below we give a detailed proof of this fact.

Observe first that the class $\mclass$ is non-empty. Indeed, define $\mlg_{\infty}=\{(m_n)_{n=1}^{\infty} \in
\prod_{n=1}^{\infty} \mlg_n: \phi_n(m_{n+1})=m_n\}$. Then $\mlg_{\infty}$ is a weak$^*$-closed 
subalgebra of
$\prod_{n=1}^{\infty} \mlg_n$, hence a von Neumann algebra. It is clear that the projections on the individual coordinates are normal
$^*$-homomorphisms; they satisfy the intertwining relation with $\phi_n$ by construction. Surjectivity follows from the existence
of isometric lifts for selfadjoint elements in $C^*$-algebras (hence bounded lifts for arbitrary elements of $\mlg_n$ to elements
in $\mlg_{\infty})$. In fact $\mlg_{\infty}$ will be (isomorphic to) the final object for $\mclass$.

Let $\nlg \in \mclass$ and denote by $\jlg_n$ the kernel of the corresponding map $\psi_n:\nlg \to \mlg_n$. Let $w_n \in
\Proj(Z(\nlg)))$ be the projection such that $\jlg_n = w_n \nlg$. As in the first part of the proof, $\psi_n|_{w_n^{\perp}
\nlg}:w_n^{\perp} \nlg \to \mlg_n$ is an isomorphism. Write $z_n:=w_n^{\perp}$. As $\jlg_{n+1} \subset \jlg_n$, the sequence
$(z_n)_{n=1}^{\infty}$ is increasing. Define additionally $z_{\infty}=\lim_{n \to \infty} z_n$, $p_1=z_1$ and  $p_n=z_n -
z_{n-1}$ for $n \geq 2$, so that $z_{\infty} = \sum_{n=1}^{\infty}p_n$. As all projections $p_n$ are central, we obtain a natural
increasing sequence of von Neumann algebras $\oplus_{k=1}^n p_k \nlg$ whose union is weak$^*$-dense in $z_{\infty} \nlg$. It is
easy to see that this yields a natural isomorphism $z_{\infty} \nlg \approx \prod_{n=1}^{\infty} p_n \nlg$.

Note that $z_{\infty} \nlg \in \mclass$ - indeed, the only thing to check is that the maps $\psi_n|_{z_{\infty} \nlg}: \mlg_n$
are surjections, and this follows from the stated above surjectivity of $\psi_n|_{z_n \nlg}$. Our claim is that $z_{\infty} \nlg$
is the final object of $\mclass$. Indeed, it suffices to show that if $\wlg$ is another von Neumann algebra in $\mclass$, then
$z_{\infty}^{(\wlg)} \wlg$ is isomorphic to $z_{\infty} \nlg$ and the isomorphism intertwines the corresponding maps into
$\mlg_n$. For the first statement it suffices to describe the algebras $p_n \nlg$ in terms of the projective sequence with which
we started. Let $n \geq 2$. Consider the diagram
\begin{diagram}
&&    &  \;z_{n-1} \nlg \oplus p_n \nlg &\;\; =& z_n \nlg \\
&&\ldTo_{\psi_{n-1}|_{z_{n-1} \nlg}} & & & \\
& \mlg_{n-1} &&                 &  &      \dTo_{\psi_n|_{z_n \nlg}}    \\
&&\luTo^{\phi_{n-1}|_{r_n^{\perp} \mlg_n}} & & & \\
 &     & & \;\;\;r_n^{\perp} \mlg_n \oplus \blg_n &\;\; = & \mlg_n
\end{diagram}
in which all arrows are isomorphisms. It immediately implies that $p_n \nlg$ is isomorphic to $\blg_n$ (note that for $n=1$ this
also holds). Moreover looking at the diagram above we see that if we denote the corresponding isomorphism between $p_n \nlg$ and
$\blg_n$ by $\gamma_n$, we can check inductively that $\gamma_1 \oplus \cdots \oplus \gamma_n: z_n \nlg \to \mlg_n$ coincides
with $\psi_n$, which assures that the natural isomorphism between $z_{\infty}^{(\wlg)}  \wlg$ and $z_{\infty} \nlg$ intertwines
the respective $\psi_n$ and $\psi^{(\wlg)}_n$ maps.

We can check that for $\nlg:=\mlg_{\infty}$ we have $z_{\infty} = 1_{\mlg_{\infty}}$. Indeed, if  $(m_n)_{n=1}^{\infty} \in
w_{\infty}M_{\infty}$ then $(m_n)_{n=1}^{\infty} \in \Ker (\psi_n)$ for each $n \in \bn$, so $(m_n)_{n=1}^{\infty}=0$.

It remains to prove uniqueness. Suppose then that $\nlg$ is a final object in $\mclass$ and let $\wlg$ be a final object in
$\mclass$ constructed above. Note that if $\psi^{(\wlg)}_n:\wlg \to \mlg_n$ denote the usual surjections, the construction above
implies that $\bigcap_{n=1}^{\infty}\Ker (\psi^{(\wlg)} _n)= \{0\}$. There is a surjective map $\psi: \wlg \to \nlg$ such that
$\psi_n = \psi^{(\wlg)}_n \circ \psi$ for all $n \in \bn$. Thus we must have $\bigcap_{n=1}^{\infty} \Ker (\psi_n) = \{0\}$, or
equivalently $z_{\infty}= 1_{\nlg}$, where $z_{\infty}$ is constructed for $\nlg$ as above. Then $\nlg = \nlg z_{\infty}$ and the
arguments above show that $\nlg \approx \wlg$.
\end{proof}

\begin{deft}
Let $(\mlg_n)_{n \in \bn}$ be a projective system of von Neumann algebras. The final object in the class $\mclass$ will be called
the projective limit of $(\mlg_n)_{n \in \bn}$ and denoted $\mlg_{\infty}$.
\end{deft}

In the next section we will show that if $(\mlg_n)_{n \in \bn}$ is a projective system of Hopf--von Neumann algebras, with the
normal surjections $\phi_n$ intertwining the respective coproducts, then $\mlg_{\infty}$ has a natural Hopf--von Neumann algebra
structure. To this end  we present here several lemmas related to constructing maps acting on/to/between projective limits.

\begin{lem} \label{iotas}
Let $(\mlg_n)_{n \in \bn}$ be as in Theorem \ref{projective} and let us adopt the notations in the proof that theorem. Define
additionally for each $n \in \bn$ the map $\iota_n:\mlg_n \to \mlg_{\infty}$ to be the inverse of $\psi_n|_{z_n \mlg_{\infty}}$
(or more precisely the composition of that inverse with the embedding of $z_n \mlg_{\infty}$ into $\mlg_{\infty}$)). Then we have
the following: for each $n \in \bn$, $x \in \mlg_{n+1}$
\[ \iota_{n} (\phi_n (x)) = z_n \iota_{n+1} (x) \]
\end{lem}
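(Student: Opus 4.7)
The plan is to exploit the fact that, by Theorem \ref{projective}, the map $\psi_n|_{z_n\mlg_\infty} \colon z_n \mlg_\infty \to \mlg_n$ is an isomorphism. If I can check that both sides of the claimed equality lie in $z_n \mlg_\infty$ and agree after applying $\psi_n$, then the equality follows from injectivity. That both sides live in $z_n \mlg_\infty$ is essentially immediate: $\iota_n(\phi_n(x)) \in z_n \mlg_\infty$ by the very definition of $\iota_n$, while $z_n \iota_{n+1}(x) \in z_n \mlg_\infty$ because $z_n$ is central and (from the construction in the proof of Theorem \ref{projective}) the sequence $(z_k)_{k \in \bn}$ is increasing, so $z_n \leq z_{n+1}$ and $z_n \iota_{n+1}(x) = z_n \cdot z_{n+1}\iota_{n+1}(x)$.

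Next I would compute the two $\psi_n$-images. For the left-hand side, the definition of $\iota_n$ as the inverse of $\psi_n|_{z_n \mlg_\infty}$ immediately gives $\psi_n(\iota_n(\phi_n(x))) = \phi_n(x)$. For the right-hand side, I would use that $\psi_n$ is a $^*$-homomorphism and $\psi_n(z_n) = 1_{\mlg_n}$ (as the unit of $z_n \mlg_\infty$ is sent to the unit of $\mlg_n$), together with the intertwining relation $\psi_n = \phi_n \circ \psi_{n+1}$ from the projective system, to obtain
\[
\psi_n(z_n \iota_{n+1}(x)) \,=\, \psi_n(\iota_{n+1}(x)) \,=\, \phi_n\bigl(\psi_{n+1}(\iota_{n+1}(x))\bigr) \,=\, \phi_n(x).
\]
Injectivity of $\psi_n$ on $z_n \mlg_\infty$ then concludes the argument.

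There is no real obstacle here; the entire proof is just a bookkeeping exercise using the explicit description of $\mlg_\infty$ supplied by Theorem \ref{projective}. The only point that requires a moment's thought is the mild observation that $z_n \leq z_{n+1}$, so that multiplication by $z_n$ really does land one back inside $z_n \mlg_\infty$ where the isomorphism $\psi_n|_{z_n\mlg_\infty}$ can be inverted.
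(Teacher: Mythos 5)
Your proof is correct and is essentially the paper's argument made explicit: the paper simply cites the commuting diagram from Theorem \ref{projective}, and your verification (both sides lie in $z_n\mlg_\infty$, where $\psi_n$ is injective, and both map to $\phi_n(x)$ under $\psi_n$) is exactly what that diagram's commutativity encodes.
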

\begin{proof}
It is a direct consequence of the diagram above, this time interpreted as follows:
 \begin{diagram}
&&    &  \;z_{n} \mlg_{\infty} &\lTo^{z_n \cdot}&  \mlg_{\infty} \\
&&\ruTo_{\iota_{n}} & & & \\
& \mlg_{n} &&                 &  &      \uTo_{\iota_{n+1}}    \\
&& \luTo(3,2)^{\phi_n} && & \\
 &     & & & & \mlg_{n+1}
\end{diagram}
- note that now the maps are not necessarily isomorphisms.\end{proof}

\begin{lem} \label{homext}
Suppose that $(\nlg_n)_{n=1}^{\infty}$, $\wlg$ are von Neumann algebras and that $\nlg = \prod_{n\in\bn} \nlg_n$. For each $n \in
\bn$ denote the central projection in $\nlg$ corresponding to $\nlg_n$ by $p_n$. Let (for each $n\in \bn$)  $\kappa_n:\wlg \to
\prod_{k=1}^n \nlg_k$ be a normal contractive map and suppose that (for each $ w \in \wlg, n \in \bn$)
\begin{equation} \label{kapcom}\kappa_{n} (w) = \sum_{k=1}^n p_k \kappa_{n+1}(w).\end{equation}
Then there exists a unique normal
contraction $\kappa:\wlg \to \nlg$ such that
\[ \kappa_{n} (w) = \sum_{k=1}^n p_k \kappa(w).\]
If each $\kappa_n$ is a $^*$-homomorphism (respectively, a $^*$-antihomomorphism), $\kappa$ is also $^*$-homomorphic
(respectively, $^*$-antihomomorphic).
\end{lem}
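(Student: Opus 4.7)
The plan is to construct $\kappa$ coordinate-by-coordinate in the product $\nlg = \prod_{n \in \bn} \nlg_n$. For each $k \in \bn$ and $w \in \wlg$, I will set the $k$-th coordinate of $\kappa(w)$ to be $p_k \kappa_n(w)$ for any $n \geq k$; the compatibility condition \eqref{kapcom} ensures that this is independent of the choice of $n$, so the assignment is well defined. Uniform contractivity of the $\kappa_n$ forces $\|p_k \kappa_n(w)\| \leq \|w\|$ for all $k,n$, hence the coordinate tuple genuinely lies in $\nlg$ and has norm at most $\|w\|$. By construction, $p_k \kappa(w) = p_k \kappa_k(w)$, and summing over $k \in \{1,\ldots,n\}$ gives the required identity. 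Uniqueness is immediate: any other such $\kappa'$ agrees with $\kappa$ on each central summand $p_k \nlg$, and the $p_k$ separate points of $\nlg$.

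For normality, I will use the fact that $\nlg$ is a product of von Neumann algebras, so any normal functional $\varphi$ on $\nlg$ decomposes as $\varphi = \sum_{k=1}^\infty \varphi_k$ with $\varphi_k$ normal on $p_k \nlg$ and $\sum_k \|\varphi_k\| < \infty$. Given a bounded net $w_\alpha \to w$ in the ultraweak topology of $\wlg$, contractivity bounds $\|\kappa(w_\alpha)\|$ uniformly, and normality of each $\kappa_k$ yields $\varphi_k(p_k \kappa(w_\alpha)) \to \varphi_k(p_k \kappa(w))$ for every $k$. A dominated convergence argument, with summable majorant $\|\varphi_k\|\cdot \sup_\alpha \|w_\alpha\|$, then gives $\varphi(\kappa(w_\alpha)) \to \varphi(\kappa(w))$.

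Finally, the multiplicative structure transfers coordinate-wise: since each $p_k$ is central in $\nlg$,
\[ p_k \kappa(vw) = p_k \kappa_k(vw) = p_k \kappa_k(v)\kappa_k(w) = (p_k \kappa(v))(p_k \kappa(w)) \]
when the $\kappa_n$ are homomorphisms, and the antihomomorphic case is identical after reversing the order of the last product; the $^*$-compatibility is checked the same way. Hence $\kappa$ inherits the algebraic structure from the $\kappa_n$.

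The main point to be careful about is the normality step, since the ultraweak topology on the infinite product $\prod_k \nlg_k$ is not simply the product of coordinate topologies; the argument genuinely needs the uniform contractivity of the $\kappa_n$ to pass from coordinate-wise ultraweak continuity to continuity into the full product. Everything else is essentially bookkeeping using that weak$^*$-closed two-sided ideals of a von Neumann product are cut out by central projections, as exploited throughout the proof of Theorem \ref{projective}.
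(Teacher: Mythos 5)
Your proof is correct and takes essentially the same approach as the paper: both define $\kappa(w)$ coordinate-wise as $\sum_{n=1}^{\infty} p_n\kappa_n(w)$ (well defined by \eqref{kapcom}) and deduce normality from the $\ell^1$-type decomposition $\nlg_* \approx \bigoplus_{n=1}^{\infty}(\nlg_n)_*$ of the predual. The paper states these steps more tersely; your dominated-convergence argument on bounded nets is exactly the detail it leaves implicit.
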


\begin{proof}
Let $w\in \wlg$. Define
\[\kappa(w) = \sum_{n=1}^{\infty} p_n \kappa_n(w) = \lim_{n \to \infty} \kappa_n(w).\]
The equality of both expressions follows from the formula \eqref{kapcom} and the properties of weak$^*$ topology in $\nlg$
(recall that we have a natural Banach space isomorphism $\nlg_* \approx \bigoplus_{n=1}^{\infty} (\nlg_n)_*$, where the last sum
is of the $l^1$-type). Similarly, normality of $\kappa$  follows from the explicit description of the predual of $\nlg$ and
normality of each $\kappa_n$. The statement on algebraic properties of $\kappa$ is easy to check, and the uniqueness is clear.
\end{proof}

The last two results have a following consequence.

\begin{prop} \label{antip}
Suppose that $(\mlg_n)_{n\in \bn}$ and $(\nlg_n)_{n\in \bn}$ are projective systems of von Neumann algebras, with connecting maps
respectively denoted by $(\phi_n^{(\mlg)})_{n\in \bn}$ and $(\phi_n^{(\nlg)})_{n\in \bn}$ and the maps from the final objects $\mlg_{\infty}$ and $\nlg_{\infty}$ respectively denoted by $(\psi_n^{(\mlg)})_{n\in \bn}$ and $(\psi_n^{(\nlg)})_{n\in \bn}$. Let $\lambda_n:\mlg_n \to \nlg_n$
($n \in \bn$) be normal contractive maps such that
\[ \lambda_{n} \circ \phi_n^{(\mlg)} = \phi_n^{(\nlg)} \circ \lambda_{n+1}, \;\;\;n \in \bn.\]
Then there exists a unique map $\lambda_{\infty}:\mlg_{\infty} \to \nlg_{\infty}$ such that
\[ \lambda_{n} \circ \psi_n^{(\mlg)} = \psi_n^{(\nlg)} \circ \lambda_{\infty}, \;\;\; n \in \bn.\]
If each $\lambda_n$ is a $^*$-homomorphism (respectively, a $^*$-antihomomorphism, a unital map), $\lambda$ is also
$^*$-homomorphic (respectively, $^*$-antihomomorphic, unital).
\end{prop}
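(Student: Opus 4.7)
My plan is to reduce this to the one-sided version that appears (commented out) above by exploiting the product structure of the target: since $\nlg_\infty \cong \prod_{k=1}^{\infty} \blg_k^{(\nlg)}$ with central projections $p_k^{(\nlg)}$ giving the summands (and $z_n^{(\nlg)} = \sum_{k=1}^n p_k^{(\nlg)}$), Lemma \ref{homext} is tailor-made to glue maps into $\nlg_\infty$ from compatible families of maps into the truncations $z_n^{(\nlg)} \nlg_\infty$. So the first step is to manufacture the right such family.

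Concretely, using the maps $\iota_n^{(\nlg)}: \nlg_n \to z_n^{(\nlg)} \nlg_\infty$ from Lemma \ref{iotas} (inverses of the $\psi_n^{(\nlg)}|_{z_n^{(\nlg)} \nlg_\infty}$), I would set
\[ \wt{\lambda}_n := \iota_n^{(\nlg)} \circ \lambda_n \circ \psi_n^{(\mlg)} : \mlg_\infty \to z_n^{(\nlg)} \nlg_\infty. \]
Each $\wt{\lambda}_n$ is normal and contractive as a composition of such maps. The key compatibility
\[ z_n^{(\nlg)} \wt{\lambda}_{n+1} = \wt{\lambda}_n \]
is then a short diagram chase: expanding the left side as $z_n^{(\nlg)} \iota_{n+1}^{(\nlg)} \lambda_{n+1} \psi_{n+1}^{(\mlg)}$, applying Lemma \ref{iotas} to rewrite $z_n^{(\nlg)} \iota_{n+1}^{(\nlg)} = \iota_n^{(\nlg)} \circ \phi_n^{(\nlg)}$, using the intertwining hypothesis $\phi_n^{(\nlg)} \circ \lambda_{n+1} = \lambda_n \circ \phi_n^{(\mlg)}$, and finally using $\phi_n^{(\mlg)} \circ \psi_{n+1}^{(\mlg)} = \psi_n^{(\mlg)}$ (definition of the projective system on the $\mlg$-side), everything collapses to $\wt{\lambda}_n$.

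With the family $\{\wt{\lambda}_n\}$ satisfying the hypotheses of Lemma \ref{homext} (applied with $\wlg = \mlg_\infty$ and $\nlg_k = p_k^{(\nlg)} \nlg_\infty$), I obtain a unique normal contraction $\lambda_\infty : \mlg_\infty \to \nlg_\infty$ with $z_n^{(\nlg)} \lambda_\infty = \wt{\lambda}_n$. Composing with $\psi_n^{(\nlg)}$, the $\iota_n^{(\nlg)}$ on the left of $\wt{\lambda}_n$ disappears and leaves exactly $\psi_n^{(\nlg)} \circ \lambda_\infty = \lambda_n \circ \psi_n^{(\mlg)}$, which is the desired intertwining. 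The algebraic properties ($^*$-homomorphic, $^*$-antihomomorphic, unital) transfer from each $\lambda_n$ to $\wt{\lambda}_n$ (since $\iota_n^{(\nlg)}$ and $\psi_n^{(\mlg)}$ are $^*$-homomorphisms) and then to $\lambda_\infty$ by the corresponding assertion in Lemma \ref{homext}; unitality is immediate since $\wt{\lambda}_n(1) = z_n^{(\nlg)}$ and these sum weak$^*$ to $1_{\nlg_\infty}$. Uniqueness follows because any candidate $\lambda_\infty$ is determined by the values $\psi_n^{(\nlg)} \circ \lambda_\infty$ as $n$ varies (equivalently, by $z_n^{(\nlg)} \lambda_\infty$), and the intertwining relation forces these to coincide with $\wt{\lambda}_n$.

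The only step that requires any care is the diagram chase verifying $z_n^{(\nlg)} \wt{\lambda}_{n+1} = \wt{\lambda}_n$, since both projective systems enter simultaneously and Lemma \ref{iotas} must be applied on the $\nlg$-side while the hypothesis links the two sides; once this is in place, everything else is formal.
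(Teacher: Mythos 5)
Your proof is correct and follows essentially the same route as the paper: define $\wt{\lambda}_n = \iota_n^{(\nlg)} \circ \lambda_n \circ \psi_n^{(\mlg)}$, verify the compatibility $z_n^{(\nlg)}\wt{\lambda}_{n+1} = \wt{\lambda}_n$ via Lemma \ref{iotas} and the intertwining hypothesis, and then glue with Lemma \ref{homext}. (In fact your write-up fixes a couple of superscript typos present in the paper's own version of this argument.)
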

\begin{proof}
Use the notation of Theorem \ref{projective} and Lemma \ref{iotas}, adorning respective maps with $^{(\mlg)}$ and $^{(\nlg)}$.
Define $\wt{\lambda}_n:\mlg_{\infty} \to z_n^{(\nlg)} \nlg_{\infty}$ ($n \in \bn$) as $\wt{\lambda}_n = \iota_n^{(\mlg)} \circ
\lambda_n \circ \psi_n^{(\nlg)}$. Then
\begin{align*} z_n^{(\nlg)}
\wt{\lambda}_{n+1} (\cdot) &= z_n^{(\nlg)} (\iota_{n+1}^{(\nlg)} \circ \lambda_{n+1} \circ \psi_{n+1}^{(\mlg)})(\cdot) =
\iota_{n}^{(\nlg)} \circ \phi_n^{(\nlg)} \circ \lambda_{n+1} \circ \psi_{n+1}^{(\mlg)} \\& = \iota_{n}^{(\nlg)} \circ \lambda_{n}
\circ \phi_n^{(\mlg)} \circ \psi_{n+1}^{(\mlg)} = \iota_{n}^{(\nlg)} \circ \lambda_{n} \circ \psi_{n}^{(\mlg)} =
\wt{\lambda}_n,\end{align*} where in the second equality we used Lemma \ref{iotas}. Apply now Lemma \ref{homext} for
$\kappa_n:=\lambda_n$, $\wlg:=\mlg_{\infty}$ and $\nlg_n:=p_n^{(\nlg)} \nlg_{\infty}$. This yields a map
$\lambda_{\infty}:\mlg_{\infty} \to \nlg_{\infty}$ such that
\[\wt{\lambda}_n = z_n^{(\nlg)} \lambda_{\infty}(\cdot).\]
Straightforward identifications using the commuting diagrams presented earlier end the proof of the main statement. As before,
uniqueness and algebraic properties of $\lambda_{\infty}$ follow easily.
\end{proof}

The above lemma provides a simple corollary describing a construction of maps acting from $\mlg_{\infty}$ into some other von
Neumann algebra.

\begin{cor} \label{homext2}
Let $(\mlg_n)_{n \in \bn}$ be a projective system of von Neumann algebras; adopt the notations of Theorem \ref{projective}.  
 Let (for each $n\in \bn$)  $\mu_n:\mlg_n \to \wlg$ be a
normal $^*$-homomorphism and suppose that (for each $n \in \bn$)
\[\mu_{n} \circ \phi_n = \mu_{n+1}.\]   Then there exists a
unique normal $^*$-homomorphism $\mu: \mlg_{\infty}\to \wlg$ such that
\[ \mu = \mu_{n} \circ \psi_n.\]
\end{cor}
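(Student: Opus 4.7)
The plan is to recognize this as a direct specialization of Proposition~\ref{antip}. I would view the target algebra $\wlg$ as the final object of the trivial projective system $\nlg_n := \wlg$ with connecting maps $\phi_n^{(\nlg)} := \id_{\wlg}$, which are plainly surjective normal $^*$-homomorphisms. By the explicit coherent-sequence construction in Theorem~\ref{projective}, the associated final object $\nlg_{\infty}$ is (canonically) identified with $\wlg$ via the diagonal embedding, and each canonical surjection $\psi_n^{(\nlg)}$ becomes the identity on $\wlg$.

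Under this identification the hypothesis $\mu_n \circ \phi_n = \mu_{n+1}$ is exactly the intertwining condition required by Proposition~\ref{antip}, so that proposition yields a unique normal $^*$-homomorphism $\mu : \mlg_{\infty} \to \wlg$ satisfying $\mu_n \circ \psi_n = \psi_n^{(\nlg)} \circ \mu = \mu$ for every $n \in \bn$, which is the desired relation. Normality and the $^*$-homomorphism property are built into the conclusion of Proposition~\ref{antip}, so there is nothing further to verify.

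Alternatively, and more transparently, one can simply put $\mu := \mu_n \circ \psi_n$ for any fixed $n$ and check that the choice of $n$ is immaterial: using the projective-system compatibility $\phi_n \circ \psi_{n+1} = \psi_n$, one computes
\[
\mu_{n+1} \circ \psi_{n+1} = (\mu_n \circ \phi_n) \circ \psi_{n+1} = \mu_n \circ (\phi_n \circ \psi_{n+1}) = \mu_n \circ \psi_n.
\]
Normality and the $^*$-homomorphism property are then inherited directly from $\mu_n$ and $\psi_n$, and uniqueness is essentially tautological, since the defining identity $\mu = \mu_n \circ \psi_n$ determines $\mu$ outright. No serious obstacle is expected; the only point requiring care is matching the notational conventions of Proposition~\ref{antip} when invoking it in the first approach.
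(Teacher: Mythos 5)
Your first argument is exactly the paper's own proof: the corollary is obtained by applying Proposition~\ref{antip} to the constant projective system $\nlg_n:=\wlg$ with identity connecting maps, under the canonical identification of its final object with $\wlg$. Your more direct alternative --- setting $\mu:=\mu_n\circ\psi_n$ and checking independence of $n$ via $\phi_n\circ\psi_{n+1}=\psi_n$ --- is also correct and, if anything, more elementary.
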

\begin{proof}
It suffices to apply Proposition \ref{antip} to the projective systems $(\mlg_n)_{n \in \bn}$ and $(\nlg_n)_{n \in \bn}$, where
$\nlg_n:=\wlg$ and $\phi_n=\id_{\wlg}$ for all $n \in \bn$ .
\end{proof}

\section{Projective limits of Hopf--von Neumann algebras}

Here we apply the results of Section 1 to construct the projective limit of a projective sequence of Hopf--von Neumann algebras.

\begin{deft}
A Hopf--von Neumann algebra is a von Neumann algebra equipped with a coproduct, i.e.\ a unital normal $^*$-homomorphism $\Com:
\mlg \to \mlg \wot \mlg$ which is coassociative:
\[ (\id_{\mlg} \ot \Com) \Com = (\Com \ot \id_{\mlg} ) \Com.\]
\end{deft}

\begin{deft}
A sequence $(\mlg_n)_{n \in \bn}$ is called a projective system of Hopf--von Neumann algebras if it is a projective system of von
Neumann algebras, each $\mlg_n$ is a Hopf--von Neumann algebra (with the coproduct $\Com_n: \mlg_n \to \mlg_n \ot \mlg_n$) and the
surjective normal homomorphisms $\phi_n:\mlg_{n+1} \to \mlg_n$ satisfy the conditions
\[ (\phi_n \ot \phi_n) \Com_{n+1} = \Com_n \phi_n.\]
\end{deft}

\begin{tw} \label{projcop}
Let $(\mlg_n)_{n \in \bn}$ be a projective system of Hopf--von Neumann algebras.  Then $\mlg_{\infty}$ is also a Hopf--von Neumann
algebra: there exists a unique coproduct $\Com:\mlg_{\infty} \to \mlg_{\infty}\wot \mlg_{\infty}$ such that
\begin{equation}
 \label{cop}
 \Com_n  \psi_n = (\psi_n \ot \psi_n) \Com, \;\; n \in \bn.\end{equation}
 In addition if each $\Com_n$ is injective, so is $\Com$.
\end{tw}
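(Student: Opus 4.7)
The plan is to construct $\Com$ via Proposition \ref{antip} applied to the projective systems $(\mlg_n)_{n\in \bn}$ and $(\mlg_n \wot \mlg_n)_{n\in \bn}$; the connecting maps $\phi_n \ot \phi_n$ of the latter system are surjective normal $^*$-homomorphisms, and the intertwining $\Com_n \circ \phi_n = (\phi_n \ot \phi_n) \circ \Com_{n+1}$ is by hypothesis. Proposition \ref{antip} therefore yields a unique normal unital $^*$-homomorphism $\wt{\Com} \colon \mlg_{\infty} \to (\mlg \wot \mlg)_{\infty}$ into the projective limit of the tensor system, satisfying $\psi_n^{(\wot)} \circ \wt{\Com} = \Com_n \circ \psi_n$ for every $n \in \bn$.

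The key step, and the main obstacle in the argument, is the identification of $(\mlg \wot \mlg)_{\infty}$ with $\mlg_{\infty} \wot \mlg_{\infty}$, since tensor products and projective limits do not commute in general. Observe first that $\mlg_{\infty} \wot \mlg_{\infty}$ equipped with the surjective normal maps $\psi_n \ot \psi_n$ belongs to the class $\mathfrak{M}$ for the tensor system. Writing $z_n \in Z(\mlg_{\infty})$ for the central projection with $\Ker \psi_n = (1-z_n) \mlg_{\infty}$ (from the proof of Theorem \ref{projective}), the orthogonal decomposition $1 = z_n \ot z_n + z_n \ot (1-z_n) + (1-z_n) \ot z_n + (1-z_n) \ot (1-z_n)$ shows that $\Ker(\psi_n \ot \psi_n) = (1 - z_n \ot z_n)(\mlg_{\infty} \wot \mlg_{\infty})$, so that the projection playing the role of ``$z_n$'' for $\mlg_{\infty} \wot \mlg_{\infty}$ within the tensor system is $z_n \ot z_n$. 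Since $z_n \uparrow 1$ ultrastrongly in $\mlg_{\infty}$ (the $\bigcap_n \Ker \psi_n = \{0\}$ observation from the end of the proof of Theorem \ref{projective}), one has $z_n \ot z_n \uparrow 1$ ultrastrongly in $\mlg_{\infty} \wot \mlg_{\infty}$; the associated $z_\infty$ equals $1$, and the construction in Theorem \ref{projective} then identifies $\mlg_{\infty} \wot \mlg_{\infty}$ with a final object for the tensor system, hence with $(\mlg \wot \mlg)_{\infty}$.

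This identification delivers the desired $\Com \colon \mlg_{\infty} \to \mlg_{\infty} \wot \mlg_{\infty}$ satisfying \eqref{cop}, whose uniqueness is immediate from Proposition \ref{antip}. Coassociativity is verified by comparing the two normal $^*$-homomorphisms $(\Com \ot \id) \Com$ and $(\id \ot \Com) \Com$ from $\mlg_{\infty}$ into $\mlg_{\infty} \wot \mlg_{\infty} \wot \mlg_{\infty}$: composition with $\psi_n \ot \psi_n \ot \psi_n$ and a double application of \eqref{cop} produces $(\Com_n \ot \id) \Com_n \psi_n$ and $(\id \ot \Com_n) \Com_n \psi_n$ respectively, which agree by coassociativity of each $\Com_n$; the triple-tensor analogue of the above $z_n$-computation gives $\bigcap_n \Ker(\psi_n \ot \psi_n \ot \psi_n) = \{0\}$, forcing the two maps to coincide. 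Finally, for injectivity, if $\Com(x) = 0$ then \eqref{cop} gives $\Com_n \psi_n(x) = 0$ for every $n$; injectivity of each $\Com_n$ then forces $\psi_n(x) = 0$ for every $n$, and $\bigcap_n \Ker \psi_n = \{0\}$ yields $x = 0$.
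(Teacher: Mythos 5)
Your proof is correct and follows essentially the same route as the paper: apply Proposition \ref{antip} to the systems $(\mlg_n)_{n\in\bn}$ and $(\mlg_n\wot\mlg_n)_{n\in\bn}$, identify the projective limit of the tensor system with $\mlg_{\infty}\wot\mlg_{\infty}$, and obtain coassociativity and injectivity from the triviality of $\bigcap_n\Ker(\psi_n)$ and its tensor analogues. The only difference is one of detail: you spell out (correctly, via the support projections $z_n\ot z_n\uparrow 1$) the identification that the paper dismisses as ``easily'' seen, and you verify coassociativity by a direct kernel argument rather than by citing the uniqueness clause of Proposition \ref{antip}, which amounts to the same thing.
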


\begin{proof}
Observe that the sequence $(\mlg_n \wot \mlg_n)_{n \in \bn}$, together with surjective connecting maps $\phi_n \ot \phi_n:
\mlg_{n+1} \wot \mlg_{n+1} \to \mlg_n \wot \mlg_n$ forms a projective limit of von Neumann algebras; moreover a projective limit
of this sequence can be easily identified with $\mlg_{\infty} \wot \mlg_{\infty}$. Hence an application of Proposition
\ref{antip} yields the existence and uniqueness of a unital normal $^*$-homomorphism $\Com: \mlg_{\infty} \to \mlg_{\infty} \wot
\mlg_{\infty}$ satisfying \eqref{cop}.

Coassociativity of $\Com$ can be proved in an analogous way, exploiting the uniqueness part of Proposition \ref{antip}.

If each $\Com_n$ is injective, $x \in \mlg_{\infty}$ and $\Com (x)=0$, then by \eqref{cop} we have (for each $n \in \bn$)
$\psi_n(x)=0$. Via identifications in Theorem \ref{projective} we see that $z_n x =0$ for all $n \in \bn$, which implies that
$x=0$.
\end{proof}

We could also consider Hopf--von Neumann algebras with a \emph{counit}, i.e.\ a normal character $\Cou:\mlg\to \bc$ such that
\[ (\Cou \ot \id_{\mlg}) \Com =  (\id_{\mlg} \ot \Cou )\Com = \id_{\mlg}\]
Then for $(\mlg_n)_{n \in \bn}$ to be  a projective system of Hopf--von Neumann algebras  we additionally require that 
\[\Cou_{n}  \circ \phi_n= \Cou_{n+1} , \;\; n\in \bn.\]
Lemma \ref{homext2} and a simple calculation imply that if the above conditions are satisfied, then $\mlg_{\infty}$ admits a
natural counit. 

We finish this section with a short discussion of projective limits of actions of Hopf--von Neumann algebras.

\begin{deft}
Let $\wlg$ be a von Neumann algebra and $(\mlg, \Com)$ be a Hopf--von Neumann algebra. We say that $\alpha: \wlg \to \wlg \wot
\mlg$ is a (Hopf--von Neumann algebraic) action of $\mlg$ on $\wlg$ if it is a normal unital injective $^*$-homomorphism such that
\[ (\id_{\wlg} \ot \Com) \alpha = (\alpha \ot \id_{\mlg} ) \alpha.\]
\end{deft}

A combination of Theorem \ref{projcop} and Lemma \ref{homext} yields the following result, which says that the Hopf--von Neumann
algebraic actions behave well under passing to projective limits.

\begin{tw} \label{projact}
Let $\wlg$ be a von Neumann algebra and let $(\mlg_n)_{n \in \bn}$ be a projective system of Hopf--von Neumann algebras. Denote by $\mlg_{\infty}$ the Hopf--von Neumann algebra arising as the projective limit in the sense of Theorem \ref{projcop}. Let $(\alpha_n)_{n \in \bn}$
be a sequence of actions of $\mlg_n$ on $\wlg$ such that for each $n \in \bn$
\[ (\id_{\wlg} \ot \phi_n) \alpha_{n+1} =  \alpha_n,\]
where $\phi_n$ are connecting maps defining the system $(\mlg_n)_{n \in \bn}$. Then there exists a unique action $\alpha$ of
$\mlg_{\infty}$ on $\wlg$ such that for each $n \in \bn$
\[ (\id_{\wlg} \ot \psi_n) \alpha =  \alpha_n.\]
\end{tw}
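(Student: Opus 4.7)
The plan is to build $\alpha$ via Proposition \ref{antip}, applied to the constant projective system $(\wlg, \id_{\wlg})$ on the source side and to the system $(\wlg \wot \mlg_n)_{n \in \bn}$ with connecting maps $\id_{\wlg} \ot \phi_n$ on the target side. First I would verify, in complete analogy with the identification of $\mlg_{\infty} \wot \mlg_{\infty}$ used in the proof of Theorem \ref{projcop}, that $\wlg \wot \mlg_{\infty}$, equipped with the surjections $\id_{\wlg} \ot \psi_n$, is the final object of the target system; this uses the direct product decomposition of $\mlg_{\infty}$ furnished by Theorem \ref{projective} together with the compatibility of $\wot$ with $\ell^{\infty}$-type products of von Neumann algebras at the level of preduals. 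The compatibility hypothesis $(\id_{\wlg} \ot \phi_n)\alpha_{n+1} = \alpha_n$ is exactly the intertwining required by Proposition \ref{antip}, which then yields a unique normal unital $^*$-homomorphism $\alpha: \wlg \to \wlg \wot \mlg_{\infty}$ with $(\id_{\wlg} \ot \psi_n)\alpha = \alpha_n$ for every $n \in \bn$.

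It then remains to check that $\alpha$ is an action, i.e.\ that it is injective and coassociative. For injectivity I would argue as at the end of the proof of Theorem \ref{projcop}: if $\alpha(x) = 0$, then $\alpha_n(x) = (\id_{\wlg} \ot \psi_n)\alpha(x) = 0$ for every $n$, and injectivity of any one $\alpha_n$ (which is part of being an action) forces $x = 0$. For coassociativity, both $(\id_{\wlg} \ot \Com)\alpha$ and $(\alpha \ot \id_{\mlg_{\infty}})\alpha$ are normal $^*$-homomorphisms from $\wlg$ into $\wlg \wot \mlg_{\infty} \wot \mlg_{\infty}$; arguing once again as in Theorem \ref{projcop}, the latter is the final object of the projective system $(\wlg \wot \mlg_n \wot \mlg_n)_{n \in \bn}$ with connecting maps $\id_{\wlg} \ot \phi_n \ot \phi_n$, so it suffices to show that the two composites agree after applying $\id_{\wlg} \ot \psi_n \ot \psi_n$ for each $n$. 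A short diagram chase using the intertwining relation between $\Com$ and $\Com_n$ from Theorem \ref{projcop} and the defining relation $(\id_{\wlg} \ot \psi_n)\alpha = \alpha_n$ reduces the left- and right-hand composites to $(\id_{\wlg} \ot \Com_n)\alpha_n$ and $(\alpha_n \ot \id_{\mlg_n})\alpha_n$ respectively, and these are equal by coassociativity of the action $\alpha_n$. Uniqueness of $\alpha$ is the uniqueness statement in Proposition \ref{antip}.

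The main (and only mildly annoying) obstacle is the auxiliary identification of $\wlg \wot \mlg_{\infty}$ and $\wlg \wot \mlg_{\infty} \wot \mlg_{\infty}$ as the final objects of the corresponding projective systems of tensor products; once these are in place, the theorem reduces to a single invocation of Proposition \ref{antip} together with a short coassociativity check entirely parallel to the argument already used in Theorem \ref{projcop}.
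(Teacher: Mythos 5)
Your proposal is correct and follows essentially the same route as the paper, whose proof is precisely the one-line reduction you describe: identify $\wlg \wot \mlg_{\infty}$ as the projective limit of $(\wlg \wot \mlg_n)_{n\in\bn}$ with connecting maps $\id_{\wlg}\ot\phi_n$ and apply Proposition \ref{antip}, arguing as in Theorem \ref{projcop}. Your added verifications of injectivity and coassociativity of $\alpha$ are exactly the details the paper leaves implicit, and they check out.
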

\begin{proof}
Similar to that of Theorem \ref{projcop}, using the fact that the von Neumann algebra $\wlg \wot \mlg_{\infty}$ is the projective
limit of the system $(\wlg \wot\mlg_n)_{n \in \bn}$, with the connecting maps $\id_{\wlg} \ot \phi_n$, and then applying
Proposition \ref{antip}.
\end{proof}

\section{The Hopf--von Neumann algebra of `all finite quantum permutations of an infinite set' as a projective limit}
\label{winfty}

Let $C(\mathbb{S}_n)$ denote the algebra of continuous functions on the quantum permutation group of the $n$-point set. Recall
(\cite{Wang}) that it is the universal $C^*$-algebra generated by the collection of orthogonal projections
$\{p_{ij}:i,j=1,\ldots,n\}$ such that for each $i=1, \ldots,n$ there is $\sum_{j=1}^n p_{ij} = \sum_{j=1}^n p_{ji} =1$. The
coproduct, counit and (bounded, $^*$-antihomomorphic) antipode are defined on $C(\mathbb{S}_n)$ by the formulas  ($i,j =1,
\ldots,n$)
\[ \Com_n(p_{ij}) = \sum_{k=1}^n p_{ik} \ot p_{kj},\]
\[ \Cou_n(p_{ij}) = \delta_{ij},  \;\;\; \kappa_n(p_{ij}) = p_{ji}.\]
For more properties of $C(\mathbb{S}_n)$ and its connections to combinatorics, free probability, Hadamard matrices and other problematics we refer to the surveys
\cite{Teosurvey} and \cite{surveyTeo}.
Denote the enveloping von Neumann algebra of $C(\mathbb{S}_n)$ by $\wlg_n$. Standard arguments show that maps $\Com_n$, $\Cou_n$
and $\kappa_n$ have unique normal extensions to $\wlg_n$, which will be denoted by the same symbols -- so that for example
$\Com_n:\wlg_n \to \wlg_n \wot \wlg_n$.

For each $n \in \bn$ we denote by $\omega_n$ the  natural surjection (and a compact quantum group morphism) from
$C(\mathbb{S}_{n+1})$ to $C(\mathbb{S}_n)$, which corresponds to mapping $\begin{bmatrix} P & 0 \\ 0& 1\end{bmatrix} \to P$ and
whose existence follows from the universal properties. This induces in a standard way the surjection on the level of universal
enveloping von Neumann algebras (it is enough to define $\phi_n=\omega_n^{**}:\QPm^{**} \to \QPn^{**}$ - the fact that $\phi_n$ is
multiplicative is the standard consequence of the definition of the Arens multiplication, surjectivity follows from the fact that
images of normal representations of von Neumann algebras are ultraweakly closed). Hence the sequence of algebras
$(\wlg_n)_{n=1}^{\infty}$ forms a projective system of von Neumann algebras.  As $\omega_n$ intertwined the respective coproducts
on the level of $C^*$-algebras, so does $\phi_n$ on the level of von Neumann algebras; similarly $\Cou_{n+1} \circ \phi_n =
\Cou_n$ for all $n \in \bn$. Hence Theorem \ref{projcop} implies that the projective limit of $(\wlg_n)_{n\in \bn}$ is a Hopf--von
Neumann algebra, denoted further by $\wlg_{\infty}$. We formulate it as a theorem:

\begin{tw}
The sequence  $(\wlg_n:=C(\mathbb{S}_n)^{**})_{n=1}^{\infty}$ is a projective system of Hopf--von Neumann algebras with counits.
Hence its projective limit denoted by $\wlg_{\infty}$ is also a Hopf--von Neumann algebra with a counit.
\end{tw}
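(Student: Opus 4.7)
The statement is really a direct application of the machinery developed in Sections~1 and 2 to the concrete system coming from Wang's quantum permutation groups, so my plan is to assemble the verifications cleanly rather than to do anything substantially new.

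First I would confirm that $(\wlg_n)_{n\in \bn}$ with connecting maps $\phi_n:=\omega_n^{**}$ is a projective system of von Neumann algebras in the sense of Definition \ref{classes}. The normality of $\phi_n$ is immediate from its definition as the bidual of a $^*$-homomorphism. Surjectivity is the one point needing a remark: since $\omega_n: C(\mathbb{S}_{n+1}) \to C(\mathbb{S}_n)$ is surjective, the image of $\phi_n$ is ultraweakly dense in $\wlg_n$, and normal $^*$-homomorphisms between von Neumann algebras have ultraweakly closed ranges, so $\phi_n$ is in fact surjective.

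Next I would verify that each $\wlg_n$ is a Hopf--von Neumann algebra with counit. The coproduct $\Com_n$ and counit $\Cou_n$ on $C(\mathbb{S}_n)$ are $^*$-homomorphic, so by taking biduals they extend uniquely to normal maps $\Com_n:\wlg_n \to \wlg_n \wot \wlg_n$ and $\Cou_n:\wlg_n \to \bc$; one should note here that $(C(\mathbb{S}_n)\ot C(\mathbb{S}_n))^{**}$ embeds naturally as a weak$^*$-dense subalgebra of $\wlg_n \wot \wlg_n$, so the extension lands in the correct target. Coassociativity and the counit axiom pass to the normal extensions by weak$^*$-continuity on weak$^*$-dense subsets.

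Then I would check the compatibility of the $\phi_n$ with $\Com_n$ and $\Cou_n$. On the $C^*$-level, the morphism $\omega_n$ is by construction a compact quantum group morphism, so $(\omega_n \ot \omega_n)\Com_{n+1} = \Com_n \omega_n$ and $\Cou_{n+1} = \Cou_n \omega_n$ on $C(\mathbb{S}_{n+1})$. Taking biduals (and using the identification of $(\ot)^{**}$ with $\wot$ on the relevant weak$^*$-dense subalgebra) yields the same identities for $\phi_n$ on $\wlg_{n+1}$ by normality.

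Having verified these hypotheses, the conclusion is immediate: Theorem \ref{projcop} provides the unique coproduct $\Com$ on $\wlg_{\infty}$ satisfying $(\psi_n \ot \psi_n)\Com = \Com_n \psi_n$, and the remark following it (applying Corollary \ref{homext2} to the target $\wlg:=\bc$ with $\mu_n:=\Cou_n$) produces the counit $\Cou$ on $\wlg_{\infty}$ with $\Cou = \Cou_n \circ \psi_n$. The counit identities $(\Cou \ot \id)\Com = \id = (\id \ot \Cou)\Com$ then follow by composing with each $\psi_n$ and using uniqueness. The only step that would merit any thought at all is the surjectivity of $\phi_n$ together with the clean identification of $(\wlg_{n+1} \wot \wlg_{n+1})$ as the natural target of the bidual of $\Com_{n+1}$; everything else is bookkeeping against the results already proved.
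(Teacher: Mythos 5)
Your proposal is correct and follows essentially the same route as the paper: the paper's proof is literally ``a direct consequence of Theorem \ref{projcop} and the discussion before the theorem,'' and that discussion contains exactly the verifications you spell out (normality and surjectivity of $\phi_n=\omega_n^{**}$ via ultraweak closedness of ranges of normal $^*$-homomorphisms, the standard normal extensions of $\Com_n$ and $\Cou_n$ to the enveloping von Neumann algebras, and the intertwining relations inherited from the $C^*$-level), followed by the application of Theorem \ref{projcop} and the counit remark. Your write-up is simply a more explicit version of the same argument.
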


\begin{proof}
A direct consequence of Theorem \ref{projcop} and the discussion before the theorem.
\end{proof}

In general we cannot expect  Hopf--von Neumann algebras to possess antipodes. Here
we have however the following fact.

\begin{tw}
The Hopf--von Neumann algebra $\wlg_{\infty}$ admits a unique $^*$-antihomo-morphic involutive map $\kappa:\wlg_{\infty} \to
\wlg_{\infty}$ such that
\[  \kappa_n \circ \psi_n =  \psi_n \circ\kappa, \;\;\; n \in \bn,\]
where $\psi_n:\wlg_{\infty} \to \wlg_n$ are the canonical surjections.
\end{tw}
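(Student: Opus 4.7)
The plan is to invoke Proposition \ref{antip} directly, taking both projective systems to be $(\wlg_n)_{n\in \bn}$ and the maps $\lambda_n:=\kappa_n:\wlg_n \to \wlg_n$. This will produce $\kappa$ as $\lambda_{\infty}$, and Proposition \ref{antip} will automatically give that $\kappa$ is a normal $^*$-antihomomorphism, as well as the uniqueness part of the statement.

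The only nontrivial thing to check in order to apply Proposition \ref{antip} is the compatibility condition
\[ \kappa_n \circ \phi_n = \phi_n \circ \kappa_{n+1}, \qquad n\in \bn.\]
For this I would first work on the $C^*$-level, and observe that the surjection $\omega_n: C(\QA_{n+1}) \to C(\QA_n)$ is a morphism of compact quantum groups (it sends the magic unitary of $\QA_{n+1}$ to the one of $\QA_n$ padded by $1$ in the last corner), and hence intertwines the antipodes: $\kappa_n \circ \omega_n = \omega_n \circ \kappa_{n+1}$ on $C(\QA_{n+1})$. Since $\phi_n = \omega_n^{**}$ and the antipode $\kappa_n$ on $\wlg_n$ is defined as the unique normal extension of the $C^*$-algebraic antipode, both sides of the displayed equation are normal maps agreeing on the weak$^*$-dense subalgebra $C(\QA_{n+1}) \subset \wlg_{n+1}$, so they coincide.

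With this relation in place, Proposition \ref{antip} furnishes a unique normal $^*$-antihomomorphism $\kappa:\wlg_{\infty} \to \wlg_{\infty}$ satisfying $\psi_n \circ \kappa = \kappa_n \circ \psi_n$ for all $n \in \bn$. It remains to verify that $\kappa$ is involutive. Both $\kappa \circ \kappa$ and $\id_{\wlg_{\infty}}$ are normal contractive maps from $\wlg_{\infty}$ to itself, and for each $n$ one has
\[\psi_n \circ (\kappa\circ \kappa) = (\kappa_n \circ \kappa_n)\circ \psi_n = \psi_n, \qquad \psi_n \circ \id_{\wlg_{\infty}} = \psi_n,\]
using $\kappa_n^2 = \id_{\wlg_n}$ (which holds because $\kappa_n$ is a CQG-type antipode on the algebra generated by the magic unitary, where it acts by transposition, and then extends uniquely and normally to the enveloping von Neumann algebra). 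By the uniqueness clause of Proposition \ref{antip}, applied to the identity projective system, $\kappa \circ \kappa = \id_{\wlg_{\infty}}$.

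The main (small) obstacle is the verification of the intertwining $\kappa_n \circ \phi_n = \phi_n \circ \kappa_{n+1}$ on the level of the enveloping von Neumann algebras, which relies on observing that the $C^*$-level identity extends by normality; everything else is a routine application of the general machinery developed in Sections 1 and 2.
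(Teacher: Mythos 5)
Your proposal is correct and follows essentially the same route as the paper: the paper likewise observes that $\omega_n \circ \kappa_{n+1} = \kappa_n \circ \omega_n$ on the $C^*$-level, passes to the enveloping von Neumann algebras, and invokes Proposition \ref{antip}, with involutivity deduced from that of the $\kappa_n$. Your write-up merely fills in the details (normality/density argument for the intertwining, and the uniqueness clause of Proposition \ref{antip} for $\kappa\circ\kappa=\id$) that the paper leaves implicit.
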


\begin{proof}
As $\omega_n \circ \kappa_{n+1}|_{\QPm} = \kappa_n \circ \omega_n|_{\QPm}$, we also have a similar relation on the level of maps
between the enveloping von Neumann algebras, with $\omega_n$ replaced by $\phi_n$. Hence Proposition \ref{antip} implies the
existence of the $^*$-antihomomorphic map $\kappa$ as above; the fact it is involutive is a consequence of the analogous property of all $\kappa_n$.
\end{proof}

It would be of course more natural to use for the projective limit construction instead of $C(\mathbb{S}_n)^{**}$ the algebras
$L^{\infty}(\mathbb{S}_n)$, the von Neumann completions of $C(\mathbb{S}_n)$ in the GNS representation with respect to the
respective Haar states. The problem lies in the fact that the maps $\omega_n$ cannot extend to `reduced' versions of the algebras
of $C(\mathbb{S}_n)$, so also not to normal continuous maps $L^{\infty}(\mathbb{S}_{n+1}) \to L^{\infty}(\mathbb{S}_n) $. The first
statement is a consequence of the fact that $C(\mathbb{S}_n)$ is not \emph{coamenable} for $n \geq 5$, as follows from the
quantum version of the Kesten criterion for amenability (\cite{Kesten}).

The fact that we can only construct the projective limit using the universal completions is related to the problem described in
the next remark.

\begin{remark}
Recently C.\,K\"ostler and R.\,Speicher introduced
 a notion of \emph{quantum exchangeability} or \emph{invariance under quantum permutations} for a family of
quantum random variables (see Definition 2.4 in \cite{ClausRoland}). This notion was later studied by S.\,Curran in \cite{Curran}
and extended to finite sequences; the basic idea is that a sequence of random variables is quantum exchangeable if its
distribution (understood as a state on a von Neumann algebra generated by the variables in question) is invariant under natural
actions of all Wang's quantum permutation groups $\QA_n$. Classically exchangeability can be defined as the invariance of the
distribution under the action of the infinite permutation group; it would be natural to expect a similar result in the quantum
context. It is not clear whether our definition would allow such a formulation; although Theorem \ref{projact} offers a way of
constructing actions of the projective limit, the natural actions of quantum permutation groups considered in \cite{ClausRoland}
are defined only on the Hopf algebraic level. As shown in Theorem 3.3 of \cite{Curran} (see also Section 5.6 of that paper), in
the presence of quantum exchangeability the actions can be extended to the reduced von Neumann algebraic completions
$L^{\infty}(\mathbb{S}_n)$, but to apply Theorem \ref{projact} to obtain the action of $\wlg_{\infty}$ on the von Neumann algebra
in question we would need to be able to extend the original actions to $C(\QA_n)^{**}$.

\end{remark}

\section{Universal von Neumann algebra generated by an infinite magic unitary}

In this section we shall define a quantum analogue of the algebra of functions on the permutation group of a countably infinite
set as the universal von Neumann algebra generated by the entries of an `infinite magic unitary'.

We begin with a $C^*$-algebraic construction.

\begin{deft}
Let $\Cclass$ denote the category with objects $({\mathsf C}, \{ q_{ij}, i,j=1,...,\infty\}),$ where ${\mathsf C}$ is a (possibly
nonunital) $C^*$-algebra generated by a family of orthogonal projections  $\{q_{ij}: i,j\in \bn\}$ and such that there exists a
faithful (and nondegenerate) representation $(\pi, \Hil)$ of $\mathsf{C}$ such that for each $i\in \bn$
\begin{equation}
\sum_{j=1}^{\infty} \pi( q_{ij}) = \sum_{j=1}^{\infty} \pi( q_{ji}) =1_{B(\Hil)}, \label{infperm}\end{equation} with the
convergence understood in the strong operator topology. A morphism from $({\mathsf C},\{ q_{ij}\})$ to $({\mathsf C}^\prime, \{
q^\prime_{ij} \})$ is given by a (necessarily nondegenerate) $C^*$-homomorphism from ${\mathsf C}$ to ${\mathsf C}^\prime$ which
maps $q_{ij}$ to $q^\prime_{ij}$ for all $i,j\in \bn$.
\end{deft}

\begin{tw}
\label{construction}
 The  category $\Cclass$ has a universal (initial) object.
\end{tw}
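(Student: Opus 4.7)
My plan is to build the initial object as the image of a universal $C^*$-algebra under a suitable universal representation. First I would let $\mathsf{A}$ be the universal $C^*$-algebra generated by projections $\{q_{ij}: i,j \in \bn\}$ subject to the algebraic relations $\sum_{j \in F} q_{ij} \leq 1$ and $\sum_{j \in F} q_{ji} \leq 1$ for every finite $F \subset \bn$ and every $i \in \bn$. These relations bound each generator in norm by $1$ and force pairwise orthogonality of projections within the same row or column (if $p,q$ are projections with $p+q\leq 1$, one checks $pqp=0$ and hence $pq=0$), so $\mathsf{A}$ is well defined by the standard Blackadar construction and is separable.

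Next, denote by $\mathcal{R}$ the class of representations $(\pi,\Hil)$ of $\mathsf{A}$ for which $\sum_{j=1}^{\infty}\pi(q_{ij}) = \sum_{j=1}^{\infty}\pi(q_{ji}) = 1_{B(\Hil)}$ in the strong operator topology for every $i\in \bn$. Any member of $\mathcal{R}$ decomposes into cyclic subrepresentations that remain in $\mathcal{R}$ (restriction of a strongly convergent sum of projections to a reducing subspace preserves the limit), and separability of $\mathsf{A}$ ensures those cyclic subrepresentations live on separable Hilbert spaces. The unitary equivalence classes of cyclic representations in $\mathcal{R}$ thus form a set; pick a representative $\pi_\alpha$ of each, and set $\pi_u := \bigoplus_\alpha \pi_\alpha$ on $\Hil_u = \bigoplus_\alpha \Hil_\alpha$. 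Finally, define $\mathsf{C}_{\tu{univ}} := \pi_u(\mathsf{A}) \subset B(\Hil_u)$, with distinguished generators $Q_{ij} := \pi_u(q_{ij})$.

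The most delicate step is verifying that $\pi_u$ itself lies in $\mathcal{R}$. Given $\xi=(\xi_\alpha)\in \Hil_u$, only countably many $\xi_\alpha$ are nonzero; an $\epsilon$-argument splits these into a finite large-norm block (where the strong convergence $\sum_j \pi_\alpha(q_{ij})\xi_\alpha \to \xi_\alpha$ of each summand gives a small remainder) and a tail (controlled by $\sum_\alpha \|\xi_\alpha\|^2$ over the complement, using that partial row-sums are projections so $\|\sum_{j=1}^N \pi_\alpha(q_{ij})\xi_\alpha - \xi_\alpha\| \leq \|\xi_\alpha\|$). The inclusion $\mathsf{C}_{\tu{univ}}\hookrightarrow B(\Hil_u)$ is then the desired faithful nondegenerate representation, certifying $(\mathsf{C}_{\tu{univ}}, \{Q_{ij}\}) \in \Cclass$.

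For the universal property, given any $(\mathsf{C}', \{q'_{ij}\}) \in \Cclass$ with faithful nondegenerate representation $\pi':\mathsf{C}'\to B(\Hil')$, the universal property of $\mathsf{A}$ produces a $^*$-homomorphism $\rho:\mathsf{A}\to \mathsf{C}'$ sending $q_{ij}$ to $q'_{ij}$, and $\pi'\circ\rho$ lies in $\mathcal{R}$. Decomposing $\pi'\circ\rho$ into cyclic summands, each is equivalent to some $\pi_\alpha$, whence $\ker\pi_u \subset \ker(\pi'\circ\rho)$. Hence $\rho$ factors uniquely through $\mathsf{C}_{\tu{univ}}$, yielding the required (necessarily nondegenerate) morphism. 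The main obstacle is the combined set-theoretic and analytic bookkeeping behind $\pi_u$: one has to guarantee that the equivalence classes form a set rather than a proper class, and that the strong convergence condition genuinely passes to the direct sum despite the lack of uniform convergence of the partial row-sum projections.
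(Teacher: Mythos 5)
Your proof is correct, but it takes a genuinely different route from the paper's. You construct the universal object as the image of the universal $C^*$-algebra $\mathsf{B}$ (on projections with orthogonal rows and columns) under a representation $\pi_u$ obtained by direct-summing one representative from each unitary equivalence class of cyclic representations satisfying the strong-convergence constraint; universality then follows from $\Ker\pi_u\subseteq\Ker(\pi'\circ\rho)$. The paper instead passes to the enveloping von Neumann algebra $\blg^{**}$, forms the ultraweak limits $p_i$, $p^j$ of the increasing partial row and column sums, lets $w$ be the smallest central projection dominating all $1-p_i$, $1-p^j$, and takes $\mathsf{A}=z\blg$ with $z=1-w$; universality follows because the normal extension of any morphism to an object of $\Cclass$ must annihilate $w$. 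Your key analytic point --- that strong convergence of the row sums survives passage to an arbitrary direct sum because the partial sums are projections, hence uniformly bounded --- is handled correctly, as is the set-theoretic point that cyclic representations of the separable algebra $\mathsf{B}$ form a set up to equivalence. What the paper's approach buys, and yours does not immediately, is the von Neumann-algebraic universal object $\mathsf{A}_\infty=z\blg^{**}$ of Corollary \ref{vnuniv}, which is what the rest of Section 4 actually uses (to define the coproduct, the antipode, and to compare with $\wlg_{\infty}$); with your construction one would still have to show that $\pi_u(\mathsf{B})''$ enjoys the analogous universal property among von Neumann algebras generated by infinite magic unitaries, which requires an additional normal-extension argument. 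Conversely, your argument is more elementary in that it avoids biduals and central carriers entirely.
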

\begin{proof}
Consider  the (formal) $\ast$-algebra ${\mathcal B}$ generated by symbols
 $\{b_{ij}:i,j \in \bn\}$ which are selfadjoint idempotents 
 \begin{equation} \label{bproj}  b_{ij}=b_{ij}^*=b_{ij}^2, \end{equation}
 and satisfy the orthogonality relations
 \begin{equation} \label{bprod} b_{ij}b_{ik}=0, \;\;\; b_{ji}b_{ki}=0 \tu{ for } k\in \bn \tu{ such that } j \neq k.  \end{equation}
 It is easy to see that this $\ast$-algebra admits many nontrivial representations on Hilbert spaces. For example, for any $n\in \bn$, we can
denote the canonical generators of $C(\mathbb{S}_n)$ by $\{q^{(n)}_{ij}:i,j=1,\ldots, n\}$ and put $b^{(n)}_{ij}=q^{(n)}_{ij}$
for $i,j \leq n$, $b^{(n)}_{ij}=0$ otherwise. Clearly,  $b^{(n)}_{ij}$ satisfy the required relations, so that we get a
$\ast$-homomorphism $\rho_n : {\mathcal B} \rightarrow C(\mathbb{S}_n)$ sending $b_{ij}$ to $b_{ij}^{(n)}$ and we can compose it
with any faithful representation of $C(\mathbb{S}_n)$. Since each $b_{ij}$ is a self-adjoint projection, the norm of its image
under any representation on a Hilbert space must be less than or equal to $1$. This implies that the universal norm defined by
$\| b \|:= \sup_{\pi} \| \pi(b) \| $, where $\pi$ varies over all representations
 of ${\mathcal B}$ on a Hilbert space, is finite. The completion of ${\mathcal B}$ under this norm will be denoted by ${\mathsf
 B}$. It is
  the universal $C^*$-algebra generated by $\{b_{ij}:i,j \in \bn\}$ satisfying relations \eqref{bproj}-\eqref{bprod}.
  We shall denote the universal enveloping von Neumann
 algebra of ${\mathsf B}$  by ${{\mathsf B}}^{**}$ and identify ${\mathsf B}$ as a $C^*$-subalgebra of ${{\mathsf B}}^{**}$.

Observe that for fixed $i\in \bn$, $p_i^{(n)}:=\sum_{j=1}^n b_{ij}$ is an increasing family of projections in ${\mathsf B}
\subset {{\mathsf B}}^{**}$, so it will converge in the ultraweak topology of  $\blg^{**}$ to some projection, say, $p_i$.
Similarly, for fixed $j\in \bn$, we write  $p^j:=\lim_{n\to \infty} \sum_{i=1}^n b_{ij}$
 in $\blg^{**}$. Let $w$ be the smallest central projection in $\blg^{**}$ which dominates $1-p_i, 1-p^j$ for all $i,j\in \bn$ and let $z=1-w$.
 Consider the $C^*$-algebra ${\mathsf A}:=z {\mathsf B}\subset {{\mathsf B}}^{**}$. Clearly, ${\mathsf A}$ is generated as a $C^*$-algebra
 by projections
 $\{q_{ij}:=zb_{ij}:i,j \in \bn\}$.
We claim that $(\alg, \{q_{ij}:i,j \in \bn\})$ is in $\Cclass$ and is indeed the universal $C^*$-algebra in this category.

First of all, it follows from the definition of $z$ that for each $i \in \bn$  we have $\sum_{j=1}^{\infty}
q_{ij}=1=\sum_{j=1}^{\infty} q_{ji}$ in the ultraweak topology inherited from the inclusion $z{{\mathsf B}}^{**} \subseteq
{{\mathsf B}}^{**}$, i.e. the ultraweak topology of ${\mathsf B}(z{\mathsf \Hil_u})$ where ${\mathsf H}_u$ denotes the universal
Hilbert space on which ${{\mathsf B}}^{**}$ acts. We complete the proof of the lemma by showing the universality of ${\mathsf A}$.
To this end, let ${\mathsf D}$ be a $C^*$-algebra generated by elements $\{t_{ij}:i,j \in \bn\}$ satisfying the relations
(\ref{infperm}), where the infinite series in (\ref{infperm}) converge in the ultraweak topology of the
 von Neumann algebra ${\pi({\mathsf D})}^{\prime \prime}$  for a fixed faithful representation $(\pi, \Hil)$ of ${\mathsf D}$.
 By  the definition of ${\mathsf B}$, we get
a $^*$-homomorphism from ${\mathsf B}$ onto ${\mathsf D}$ which sends $b_{ij}$ to $t_{ij}$ (for each $i,j \in \bn$). This
composed with $\pi$ extends to a unital, normal $\ast$-homomorphism, say $\rho$, from ${\mathsf B}^{**}$ onto ${\pi({\mathsf
D})}^{\prime \prime}$. In particular,  $\rho(p_i)=\sum_{j=1}^{\infty} t_{ij}=1$, and $\rho(p^i)=\sum_{j=1}^{\infty} t_{ji}=1$ for
all $i\in \bn$, so $1-p_i, 1-p^i $ belong to  the ultraweakly closed two-sided ideal ${\mathsf I}:={\rm Ker} \rho$ of ${{\mathsf
B}}^{**}$.
 Thus, if we denote by $w_0$ the central projection in $\blg^{**}$ such that ${\mathsf I}=w_0 {{\mathsf B}}^{**}$,  then $w_0$ dominates $1-p_i$ and
$1-p^i$ for all $i\in \bn$, and hence by the definition of $w$, we have $w_0 \geq w$. It follows that $w \in {\mathsf I}$, i.e.\
$\rho(w)=0$, or in other words, $\rho(z)=1$. This implies $\rho(b)=\rho(zb)$ for all $b \in {\mathsf B}$, so that we get a
$^*$-homomorphism $\rho_1:=\rho|_{\mathsf A}$ from ${\mathsf A}$ to ${\mathsf D}$ which satisfies $\rho_1(q_{ij})=t_{ij}$ for all
$i,j \in \bn$. This completes the proof of the universality of ${\mathsf A}$. \end{proof}

Denote the   von Neumann algebra  $z{{\mathsf B}}^{**}$ by   ${\mathsf A}_\infty$, and note that it should not be confused with
the universal enveloping von Neumann algebra of ${\mathsf A}$, which may be bigger. Note that the proof of the above theorem
indeed provides also a universal property of the von Neumann algebra ${\mathsf A}_\infty$, as stated in the next corollary.

\begin{cor}
\label{vnuniv} The von Neumann algebra ${\mathsf A}_\infty$ is the (unique up to  an isomorphism of von Neumann algebras)
universal object in the category of all von Neumann algebras ${\mathsf N}$ which are generated (in the ultraweak topology) by
projections $\{ n_{ij}: i,j \in \bn\}$ satisfying $\sum_{j=1}^{\infty}  n_{ij}=\sum_{j=1}^{\infty} n_{ji}=1_{\nlg}$ (convergence
in the ultraweak topology).
\end{cor}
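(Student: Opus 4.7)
The plan is to follow the exact strategy of the proof of Theorem \ref{construction}, but now in the category of von Neumann algebras with morphisms being normal $^*$-homomorphisms sending $q_{ij} \mapsto n_{ij}$. First I would check that $\mathsf{A}_\infty$ itself belongs to this category. Indeed, $\mathsf{A} = z\mathsf{B}$ is a $C^*$-subalgebra of $\mathsf{A}_\infty = z\mathsf{B}^{**}$ which is ultraweakly dense in it (as $\mathsf{B}$ is ultraweakly dense in $\mathsf{B}^{**}$), and it is generated as a $C^*$-algebra by $\{q_{ij}\}$; hence $\{q_{ij}\}$ generates $\mathsf{A}_\infty$ ultraweakly. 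The row and column sum conditions for $\{q_{ij}\}$ were already established in the proof of Theorem \ref{construction}.

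Next, given any $(\mathsf{N}, \{n_{ij}\})$ in the category, I would construct the required normal $^*$-homomorphism $\mathsf{A}_\infty \to \mathsf{N}$ by essentially rerunning the argument of Theorem \ref{construction}. The universal property of $\mathsf{B}$ yields a $^*$-homomorphism $\sigma: \mathsf{B} \to \mathsf{N}$ with $\sigma(b_{ij}) = n_{ij}$. Fixing a faithful normal representation $(\pi, \Hil)$ of $\mathsf{N}$, the composition $\pi \circ \sigma: \mathsf{B} \to B(\Hil)$ extends uniquely to a normal $^*$-homomorphism $\tilde\rho: \mathsf{B}^{**} \to B(\Hil)$ by the defining universal property of the enveloping von Neumann algebra. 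The image of $\tilde\rho$ equals $\pi(\sigma(\mathsf{B}))''$, and since $\sigma(\mathsf{B})$ contains $\{n_{ij}\}$ which ultraweakly generates $\mathsf{N}$, this image coincides with $\pi(\mathsf{N})$. Composing with $\pi^{-1}$ gives a normal surjection $\rho: \mathsf{B}^{**} \to \mathsf{N}$.

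Then I would repeat the kernel computation: the row/column sum conditions on $\{n_{ij}\}$ force $\rho(p_i) = \rho(p^i) = 1_{\mathsf{N}}$ for all $i \in \bn$, so $1 - p_i$ and $1 - p^i$ lie in $\Ker \rho$. Writing $\Ker \rho = w_0 \mathsf{B}^{**}$ for the corresponding central projection, the minimality of $w$ gives $w_0 \geq w$, whence $\rho(z) = 1_{\mathsf{N}}$. Consequently $\rho$ restricts to a normal $^*$-homomorphism $\rho_1: \mathsf{A}_\infty \to \mathsf{N}$ with $\rho_1(q_{ij}) = n_{ij}$.

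Uniqueness of $\rho_1$ follows at once from the ultraweak density of the $^*$-subalgebra generated by $\{q_{ij}\}$ in $\mathsf{A}_\infty$ together with normality, and uniqueness of the universal object up to isomorphism of von Neumann algebras is then the standard categorical consequence of universality applied in both directions. The main technical point is verifying that the normal extension $\tilde\rho$ really lands in $\pi(\mathsf{N})$ rather than in some larger von Neumann subalgebra of $B(\Hil)$; here the ultraweak density of the $C^*$-image of $\mathsf{B}$ in $\mathsf{N}$, combined with normality, is essential.
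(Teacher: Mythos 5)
Your proposal is correct and follows essentially the same route as the paper: the paper proves Corollary \ref{vnuniv} simply by observing that the proof of Theorem \ref{construction} (lifting the generators to $\mathsf{B}$, extending normally to $\mathsf{B}^{**}$, and showing the kernel projection dominates $w$ so that $\rho(z)=1$) already yields the von Neumann algebraic universal property, and your write-up is exactly that argument carried out in detail. The one step you leave implicit -- that the $n_{ij}$ satisfy the orthogonality relations \eqref{bprod} (because projections summing ultraweakly to $1$ are mutually orthogonal), which is what licenses the map $\sigma\colon\mathsf{B}\to\mathsf{N}$ -- is immediate and is equally implicit in the paper.
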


Using the von Neumann algebraic universality we have the following result.
\begin{prop} \label{copAinf}
The von Neumann algebra ${\mathsf A}_\infty$  admits a natural coproduct $\Delta_{\alg}: \alg_{\infty} \to \alg_{\infty} \wot
\alg_{\infty}$ and a counit $\Cou_{\alg}: \alg_{\infty}\to \bc$.
\end{prop}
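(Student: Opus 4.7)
The plan is to apply the von Neumann algebraic universal property of $\alg_{\infty}$ (Corollary \ref{vnuniv}) twice, once to produce $\Com_{\alg}$ and once to produce $\Cou_{\alg}$, and then to verify the coassociativity and counit identities by checking them on the generating projections and invoking normality.

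For the coproduct, I define the candidate images
\[\tilde q_{ij} := \sum_{k=1}^{\infty} q_{ik} \ot q_{kj}, \qquad i,j \in \bn,\]
inside $\alg_{\infty} \wot \alg_{\infty}$, with the series converging in the strong operator (hence ultraweak) topology: indeed $\{q_{ik} \ot q_{kj}\}_{k \in \bn}$ is a family of pairwise orthogonal projections because for $k \neq l$ we have $q_{ik} q_{il} = 0$ by the row orthogonality relations inherited from $\blg$. The same relations together with their column counterparts $q_{kj}q_{lj}=0$ give
\[\tilde q_{ij}^2 \;=\; \sum_{k,l} q_{ik}q_{il} \ot q_{kj}q_{lj} \;=\; \sum_k q_{ik} \ot q_{kj} \;=\; \tilde q_{ij},\]
so each $\tilde q_{ij}$ is a projection, and the row/column sum conditions required by Corollary \ref{vnuniv} follow by interchanging the order of summation:
\[\sum_{j=1}^{\infty} \tilde q_{ij} \;=\; \sum_k q_{ik} \ot \Big(\sum_j q_{kj}\Big) \;=\; \sum_k q_{ik} \ot 1 \;=\; 1 \ot 1,\]
and symmetrically for columns. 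Thus the ultraweakly closed subalgebra of $\alg_{\infty} \wot \alg_{\infty}$ generated by $\{\tilde q_{ij}\}$ lies in the category of Corollary \ref{vnuniv}, which yields a unique normal $^*$-homomorphism $\Com_{\alg}: \alg_{\infty} \to \alg_{\infty} \wot \alg_{\infty}$ with $\Com_{\alg}(q_{ij}) = \tilde q_{ij}$; unitality follows at once from $\Com_{\alg}(1) = \sum_j \Com_{\alg}(q_{1j}) = \sum_j \tilde q_{1j} = 1 \ot 1$. Coassociativity is checked on generators, where both $(\Com_{\alg} \ot \id)\Com_{\alg}(q_{ij})$ and $(\id \ot \Com_{\alg})\Com_{\alg}(q_{ij})$ produce the same triple sum $\sum_{k,l} q_{ik} \ot q_{kl} \ot q_{lj}$, and then extended by normality to all of $\alg_{\infty}$.

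For the counit, I apply Corollary \ref{vnuniv} with $\nlg = \bc$ and $n_{ij} := \delta_{ij}$. These are projections in $\bc$ with row and column sums equal to $1$, so universality supplies a unique normal character $\Cou_{\alg}:\alg_{\infty} \to \bc$ with $\Cou_{\alg}(q_{ij}) = \delta_{ij}$. The counit identities $(\Cou_{\alg} \ot \id)\Com_{\alg} = \id = (\id \ot \Cou_{\alg})\Com_{\alg}$ reduce on generators to $\sum_k \delta_{ik}\, q_{kj} = q_{ij}$ (and similarly on the other side), and extend to all of $\alg_{\infty}$ by normality of the maps involved.

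The only genuinely nontrivial step is the first one: verifying that the infinite sums defining $\tilde q_{ij}$ converge in $\alg_{\infty} \wot \alg_{\infty}$ and satisfy the magic-unitary relations in that ambient von Neumann algebra. Once that is in place, existence of $\Com_{\alg}$ and $\Cou_{\alg}$ is a formal consequence of the universal property of $\alg_{\infty}$, and each of the Hopf--von Neumann axioms reduces to a direct identity on the generators $q_{ij}$ that then propagates to all of $\alg_{\infty}$ by normality.
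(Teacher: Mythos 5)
Your proposal is correct and follows essentially the same route as the paper: define $x_{ij}=\sum_k q_{ik}\ot q_{kj}$, note the summands are mutually orthogonal projections so the series converges ultraweakly, verify the magic-unitary relations in $\alg_{\infty}\wot\alg_{\infty}$ (resp.\ take $\delta_{ij}$ in $\bc$), and invoke the universal property of Corollary \ref{vnuniv} to obtain $\Com_{\alg}$ and $\Cou_{\alg}$, with coassociativity and the counit identities checked on generators and extended by normality. You simply spell out in more detail the computations the paper labels as ``easy to check.''
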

\begin{proof}
Consider for each $i,j \in \bn$ \[ x_{ij}:=\sum_{k=1}^{\infty} q_{ik} \ot q_{kj}\]
 as an element of ${\mathsf A}_\infty \wot
{\mathsf A}_\infty$. We note that the series converges in the ultraweak topology of the von Neumann algebra ${\mathsf A}_\infty
\wot {\mathsf A}_\infty$, the summands being mutually orthogonal projections. It is easy to check using the defining properties
of $q_{ij}$ that for each $i,j \in \bn$ there is $x_{ij}^2=x_{ij}=x_{ij}^*$, and $\sum_{k=1}^{\infty} x_{ik}=\sum_{k=1}^{\infty}
x_{ki}=1_{\alg_{\infty} \wot \alg_{\infty}}$. By the universality of the von Neumann algebra stated in Corollary \ref{vnuniv}, we
obtain a normal unital $\ast$-homomorphism $\Delta_{\alg} : {\mathsf A}_\infty \rightarrow {\mathsf A}_\infty \wot { {\mathsf
A}_\infty}$ given
 by  $\Delta_{\alg}(q_{ij})=x_{ij}, \;\;\; i,j\in \bn$, which is easily seen to be coassociative.  Similarly,
 we have a normal $^*$-homomorphism  $\epsilon_{\alg} : {\mathsf A}_\infty \rightarrow \bc$ given on generators by
$\epsilon_{\alg}(q_{ij})=\delta_{ij}$. Note that the existence of the counit implies in particular that $\Com_{\alg}$ is
injective.\end{proof}

The algebra $\alg_{\infty}$ is also equipped with a kind of an antipode.

\begin{prop}
The prescription \[ \kappa_{\alg}(q_{ij}) = q_{ji}, \;\;\; i,j \in \bn\] extends to a normal involutive $^*$-antihomomorphism of
$\alg_{\infty}$.
\end{prop}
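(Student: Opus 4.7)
The plan is to obtain $\kappa_{\alg}$ directly from the universal property of $\alg_{\infty}$ stated in Corollary \ref{vnuniv}, applied to the opposite von Neumann algebra $\alg_{\infty}^{\mathrm{op}}$. The key observation is that $\alg_{\infty}^{\mathrm{op}}$ is again a von Neumann algebra (carrying the same ultraweak topology as $\alg_{\infty}$, since the predual is unchanged up to the obvious identification), and that the elements $n_{ij} := q_{ji}\in \alg_{\infty}^{\mathrm{op}}$ form a candidate family of generators in the sense of Corollary \ref{vnuniv}.

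First I would verify that $(\alg_{\infty}^{\mathrm{op}}, \{n_{ij}\}_{i,j\in \bn})$ satisfies the hypotheses of Corollary \ref{vnuniv}. The elements $n_{ij}$ are projections, as the defining equations $q_{ji}^* = q_{ji}$ and $q_{ji}^2 = q_{ji}$ persist in the opposite algebra (self-adjoint idempotents transfer trivially to the opposite). The required sum relations become $\sum_{j=1}^\infty n_{ij} = \sum_{j=1}^\infty q_{ji} = 1$ and $\sum_{j=1}^\infty n_{ji} = \sum_{j=1}^\infty q_{ij} = 1$, with ultraweak convergence inherited from $\alg_{\infty}$. Finally, since $\{q_{ij}\}$ ultraweakly generate $\alg_{\infty}$, they also ultraweakly generate $\alg_{\infty}^{\mathrm{op}}$, so $\{n_{ij}\}$ does as well (the map $q_{ij}\mapsto q_{ji}$ is just a relabelling of the same generating set).

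Corollary \ref{vnuniv} then supplies a unique normal unital $^*$-homomorphism $\tilde{\kappa}:\alg_{\infty}\to \alg_{\infty}^{\mathrm{op}}$ with $\tilde\kappa(q_{ij}) = n_{ij} = q_{ji}$. Viewing $\tilde\kappa$ as a map into $\alg_{\infty}$ itself gives the required normal $^*$-antihomomorphism $\kappa_{\alg}$, and uniqueness on the ultraweakly generating set $\{q_{ij}\}$ guarantees that $\kappa_{\alg}$ is completely determined by the formula on generators.

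For the involutivity $\kappa_{\alg}^2 = \id_{\alg_{\infty}}$, I would note that the composition of two normal $^*$-antihomomorphisms is a normal $^*$-homomorphism, and that on generators $\kappa_{\alg}^2(q_{ij}) = \kappa_{\alg}(q_{ji}) = q_{ij}$; since $\{q_{ij}\}$ ultraweakly generate $\alg_{\infty}$ and $\kappa_{\alg}^2$ is normal, it must equal the identity (alternatively, invoke the uniqueness part of Corollary \ref{vnuniv} applied to the identity map). The only mildly subtle point, and really the one place requiring care, is the first paragraph: one must be explicit that the universal property of Corollary \ref{vnuniv} applies to $\alg_{\infty}^{\mathrm{op}}$, i.e.\ that it is legitimately an object of the same category of von Neumann algebras generated by row-and-column-stochastic families of projections. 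Everything else is a formal consequence.
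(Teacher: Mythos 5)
Your proof is correct and follows essentially the same route as the paper: the authors likewise view the generators inside the opposite von Neumann algebra $\alg_{\infty}^{\mathrm{op}}$ and invoke the universality of Corollary \ref{vnuniv} applied to the family $q_{ji}$ to produce the normal unital $^*$-homomorphism $\alg_{\infty}\to\alg_{\infty}^{\mathrm{op}}$, reinterpreted as a $^*$-antihomomorphism. Your verification of the hypotheses for the opposite algebra and the uniqueness argument for involutivity simply make explicit what the paper leaves as ``easy to see.''
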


\begin{proof}
View generators $q_{ij}$  as the elements of the opposite von Neumann algebra ${{\mathsf A}_\infty}^{\rm op}$ and denote them by
$\{q^o_{ij}:i,j \in \bn\}$. Once again using the universality as in Corollary \ref{vnuniv}, it is easy to see that the map
$q_{ij} \mapsto q^o_{ji}$ canonically induces a normal unital  $\ast$-homomorphism  from ${\mathsf A}_\infty$ to ${{\mathsf
A}_\infty}^{\rm op}$, which can be viewed as a $^*$- antihomomorphism on $\alg_{\infty}$.
\end{proof}

Let us now compare the construction above with that from the previous section. Recall the projective system
$(\wlg_n)_{n=1}^{\infty}$ of Hopf - von Neumann algebras introduced in Section \ref{winfty}. Let $\Wclass$ denote the
corresponding category of von Neumann algebras (as in Definition \ref{classes}).

\begin{prop} \label{propsummand}
The algebra $\alg_{\infty}$ of Corollary \ref{vnuniv} is an element of $\Wclass$. Therefore $\wlg_{\infty}$ is a direct summand
of $\alg_{\infty}$.
\end{prop}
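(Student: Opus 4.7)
The plan is to produce, for each $n \in \bn$, a surjective normal $^*$-homomorphism $\psi_n^{(\alg)}: \alg_\infty \to \wlg_n$ intertwining the connecting maps $\phi_n$; once $\alg_\infty \in \Wclass$ is established, the direct summand statement follows immediately from the universal property of $\wlg_\infty$.

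To construct $\psi_n^{(\alg)}$, I would exploit the universal property of $\alg_\infty$ as described in Corollary \ref{vnuniv}. The idea is to exhibit an infinite magic unitary inside $\wlg_n$ by \emph{padding} the finite one. Let $\{p_{ij}^{(n)}: 1 \leq i,j \leq n\}$ be the canonical generators of $C(\mathbb{S}_n) \subset \wlg_n$. Define, for $i,j \in \bn$,
\[
r_{ij}^{(n)} = \begin{cases} p_{ij}^{(n)} & \text{if } i,j \leq n, \\ \delta_{ij}\,1_{\wlg_n} & \text{if } \max(i,j) > n. \end{cases}
\]
A routine check shows that each $r_{ij}^{(n)}$ is an orthogonal projection in $\wlg_n$ and that the row/column sums $\sum_{j=1}^{\infty} r_{ij}^{(n)}$ and $\sum_{j=1}^{\infty} r_{ji}^{(n)}$ equal $1_{\wlg_n}$ in the ultraweak topology (each such sum has only finitely many nonzero terms). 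By Corollary \ref{vnuniv} we obtain a unique normal unital $^*$-homomorphism $\psi_n^{(\alg)}: \alg_\infty \to \wlg_n$ satisfying $\psi_n^{(\alg)}(q_{ij}) = r_{ij}^{(n)}$. Its image contains all $p_{ij}^{(n)}$, which generate $C(\mathbb{S}_n)$ as a $C^*$-algebra; hence the image is weak$^*$-dense in $\wlg_n$, and since the image of a normal morphism of von Neumann algebras is ultraweakly closed, $\psi_n^{(\alg)}$ is surjective.

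Next, I would verify the intertwining condition $\psi_n^{(\alg)} = \phi_n \circ \psi_{n+1}^{(\alg)}$ on generators. Recall that $\phi_n$ extends the map $\omega_n$ which sends $p_{ij}^{(n+1)} \mapsto p_{ij}^{(n)}$ for $i,j \leq n$, kills the off-diagonal entries of the last row and column, and sends $p_{n+1,n+1}^{(n+1)} \mapsto 1$. A case analysis on the relative size of $i,j$ and $n$ shows $\phi_n(r_{ij}^{(n+1)}) = r_{ij}^{(n)}$ in every case, so the two normal $^*$-homomorphisms agree on the generators of $\alg_\infty$ and hence coincide. This gives $\alg_\infty \in \Wclass$.

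For the second assertion, apply the final-object property of $\wlg_\infty$ (Theorem \ref{projective}) with $\nlg = \alg_\infty$: there exists a surjective normal $^*$-homomorphism $\psi : \alg_\infty \to \wlg_\infty$ intertwining the projections to each $\wlg_n$. Its kernel is a weak$^*$-closed two-sided ideal of $\alg_\infty$, hence of the form $w\alg_\infty$ for a central projection $w \in Z(\alg_\infty)$, and $\psi$ restricts to an isomorphism of $(1-w)\alg_\infty$ onto $\wlg_\infty$. This realizes $\wlg_\infty$ as a direct summand of $\alg_\infty$. The only non-routine point is the padding construction and the verification that it respects the connecting maps $\phi_n$; everything else is a direct application of the universal properties already established.
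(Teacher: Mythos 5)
Your proposal is correct, but it reaches the first assertion by a genuinely different route than the paper. The paper works upstream with the C$^*$-algebra $\blg$ (which carries only the orthogonality relations, not the row/column summability), pads the canonical $n\times n$ magic unitary \emph{by zeros} to get surjections $\gamma_n:\blg\to C(\mathbb{S}_n)$, passes to biduals to conclude $\blg^{**}\in\Wclass$, and then must do extra work to see that cutting down by the central projection $z$ preserves surjectivity of the $\psi_n$; this forces it to identify $\Ker(\psi_n)=w_n\blg^{**}$, express $w_n$ via central carriers as in \eqref{cent1}, and verify $z\geq z_\infty\geq z_n$. You instead pad \emph{by $\delta_{ij}1_{\wlg_n}$}, which turns the finite magic unitary into a genuine infinite one inside $\wlg_n$, so the von Neumann algebraic universal property of $\alg_\infty$ (Corollary \ref{vnuniv}) applies directly and produces the surjections $\psi_n^{(\alg)}:\alg_\infty\to\wlg_n$ in one step; your case-by-case check of $\phi_n(r^{(n+1)}_{ij})=r^{(n)}_{ij}$ is correct, and your derivation of the direct-summand statement from the final-object property and the structure of weak$^*$-closed ideals matches the paper's general machinery. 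Your argument is shorter and avoids all the central-projection bookkeeping; what it does not deliver is precisely that bookkeeping (the projections $w_n$, $z_n$, $z_\infty$ and formula \eqref{cent1}), which the paper reuses in the two subsequent results, in particular in the lemma showing $z\neq z_\infty$. So as a self-contained proof of this proposition your approach is valid and arguably cleaner, but it would not substitute for the paper's proof in the overall architecture of Section 4.
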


\begin{proof}
Recall that $\alg_{\infty} \approx z \blg^{**}$ in the notation of Theorem \ref{construction}. The universal property of $\blg$
implies that for each $n \in \bn$ there is a surjection $\gamma_n: \blg \to C(\mathbb{S}_n)$ defined by the formula
\[\gamma_n (b_{ij})= \begin{cases} q_{ij}^{(n)} & i,j \leq n \\ 0 & \tu{otherwise} \end{cases}.\]
Let $\psi_n=\gamma_n^{**}$ - it again becomes a surjection, this time onto $\wlg_n=\QPn^{**}$, and it is easy to check that
$\psi_n = \phi_n \circ \psi_{n+1}$ for all $n \in \bn$. Hence $\blg^{**}$ is in the class $\Wclass$ associated with the sequence
$(\wlg_n)_{n \in \bn}$ according to Definition \ref{classes}.

 Define
$w_n$ to be the smallest central projection in $\blg^{**}$ dominating all projections $(p_j^{(n)})^{\perp}$ and $(q_j^{(n)})^{\perp}$, where
\[ p_j^{(n)} = \sum_{i=1}^n b_{ij}, \;\;\; q_j^{(n)} = \sum_{i=1}^n b_{ji}.\]
Note that we can describe $w_n$ in terms of the central supports of $(p_j^{(n)})^{\perp}$ and $(q_j^{(n)})^{\perp}$:
\begin{equation}
 \label{cent1}
w_n = \bigvee_{j \in \bn} z((p_j^{(n)})^{\perp}) \vee \bigvee_{j \in \bn} z((q_j^{(n)})^{\perp}).\end{equation} For that it
suffices to note that a central projection dominates another, not necessarily central, projection if and only if it dominates its
central carrier.

The argument similar to that of the proof of Theorem \ref{construction}, exploiting the fact that $\QPn^{**}$ can be described as
the universal von Neumann algebra generated by an $n$ by $n$ magic unitary implies that $\psi_n: w_n^{\perp}\blg^{**} \to
\QPn^{**}$ is an isomorphism. Indeed, it is easy to see that for each $j\in \bn$ there is $\psi_n(p_j^{(n)}) =
\psi_n(q_j^{(n)})=1_{\QPn^{**}}$, so that the projection determining the kernel of $\psi_n$ dominates $z_n:=w_n^{\perp}$ and
$\psi_n(x)=\psi_n(z_n x)$ for all $x \in \blg^{**}$. Thus we obtain a surjective map $\psi_n|_{z_n \blg^{**}}\to \QPn^{**}$ which
preserves the natural magic unitaries in both algebras (observe that $\sum_{i=1}^n z_n b_{ij} = \sum_{i=1}^n z_n b_{ji}=z_n$).
The afore-mentioned universality of $\QPn^{**}$ implies that it is an isomorphism.

Hence $\Ker(\psi_n)$ is equal to $w_n \blg^{**}$ and the intersection $\bigcap_{n \in \bn}\Ker(\psi_n)$ is equal to
$w_{\infty}\blg^{**}$, where $w_{\infty}=\lim_{n \in \bn} w_n$.

Recall that the central projection $w=z^{\perp} \in \blg^{**}$ was defined in the proof of Theorem \ref{construction} as the
smallest central projection in $\blg^{**}$ dominating all projections $p_j^{\perp}$ and $q_j^{\perp}$, where $p_j = \lim_{n \in
\bn} p_j^{(n)}$ and $q_j = \lim_{n \in \bn} q_j^{(n)}$. Hence it is easy to check that $z\geq z_{\infty}:=w_{\infty}^{\perp}$ and
in particular we can view $z\blg^{**}$ as an element of $\Wclass$.
\end{proof}

The inclusion $\wlg_{\infty} \subset \alg_{\infty}$ is close to being an inclusion of Hopf - von Neumann algebras. This is
formulated in the next proposition.

\begin{prop}
View $\wlg_{\infty}$ as a subalgebra of $\alg_{\infty}$, so that $\wlg_{\infty} = z_{\infty} \alg_{\infty}$. The normal
$^*$-homomorphism $\hat{\Com}: \wlg_{\infty} \to \wlg_{\infty} \wot \wlg_{\infty}$ defined by: $\hat{\Com}(x) = (z_{\infty} \ot
z_{\infty}) (\Com_{\alg}(x))$ ($x \in \wlg_{\infty}$) is unital and coassociative. It in fact coincides with the coproduct on
$\wlg_{\infty}$ constructed as a projective limit in Theorem \ref{projcop}.
\end{prop}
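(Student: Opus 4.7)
The plan is to identify $\hat{\Com}$ with the unique projective-limit coproduct $\Com$ from Theorem \ref{projcop}; unitality and coassociativity then follow for free. The key is to verify the intertwining relation \eqref{cop} and invoke the uniqueness part of Proposition \ref{antip}.

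First I would check that $\hat{\Com}$ is a normal $^*$-homomorphism from $\wlg_{\infty}$ into $\wlg_{\infty} \wot \wlg_{\infty}$. Normality is immediate. Multiplicativity uses that $z_{\infty}$ is central in $\alg_{\infty}$ (as the ultraweak supremum of the central projections $z_n$ appearing in Proposition \ref{propsummand}), so $z_{\infty} \ot z_{\infty}$ is a central idempotent in $\alg_{\infty} \wot \alg_{\infty}$, and the standard computation $\hat{\Com}(xy) = (z_{\infty} \ot z_{\infty})\Com_{\alg}(x)\Com_{\alg}(y) = \hat{\Com}(x)\hat{\Com}(y)$ goes through.

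Next I would verify that $\psi_n(z_{\infty}) = 1_{\wlg_n}$: Proposition \ref{propsummand} provides central projections $z_n \in \blg^{**}$ with $\Ker(\psi_n) = z_n^{\perp} \blg^{**}$ and $z_n \nearrow z_{\infty}$, so $1_{\wlg_n} = \psi_n(z_n) \leq \psi_n(z_{\infty}) \leq 1_{\wlg_n}$. Therefore for $x \in \wlg_{\infty}$,
\[
(\psi_n \ot \psi_n)\hat{\Com}(x) = \bigl(\psi_n(z_{\infty}) \ot \psi_n(z_{\infty})\bigr)(\psi_n \ot \psi_n)\Com_{\alg}(x) = (\psi_n \ot \psi_n)\Com_{\alg}(x),
\]
reducing matters to showing $(\psi_n \ot \psi_n)\Com_{\alg} = \Com_n \psi_n$ on $\alg_{\infty}$. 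Both sides being normal $^*$-homomorphisms, it suffices to test on the generators $q_{ij}$. Using $\Com_{\alg}(q_{ij}) = \sum_{k \in \bn} q_{ik} \ot q_{kj}$ and the prescription $\psi_n(q_{ij}) = p^{(n)}_{ij}$ for $i,j \leq n$ and $\psi_n(q_{ij})=0$ otherwise, only the summands with $k \leq n$ survive when $i,j \leq n$, recovering $\Com_n(p^{(n)}_{ij})$; for $i>n$ or $j>n$ both sides vanish.

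With the intertwining relation \eqref{cop} established for $\hat{\Com}$, the uniqueness part of Proposition \ref{antip}, applied to the projective systems $(\wlg_n)_{n \in \bn}$ and $(\wlg_n \wot \wlg_n)_{n \in \bn}$ with $\lambda_n = \Com_n$, forces $\hat{\Com} = \Com$, and unitality and coassociativity are inherited from $\Com$. The only place requiring any real care is the bookkeeping around the auxiliary projection $z_{\infty}$ — specifically its centrality and the identity $\psi_n(z_{\infty}) = 1_{\wlg_n}$ — since the various quotient and surjection maps between $\blg^{**}$, $\alg_{\infty}$ and $\wlg_{\infty}$ need to be tracked consistently; beyond that the argument is purely mechanical.
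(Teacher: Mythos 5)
Your argument is correct and follows essentially the same route as the paper: both proofs reduce to verifying the intertwining relation $\Com_n \psi_n = (\psi_n \ot \psi_n)\Com_{\alg}$ on the generators $q_{ij}$ (using normality to pass through the infinite sums) and then invoke the uniqueness statement of Theorem \ref{projcop}/Proposition \ref{antip}. The only difference is that the paper first establishes $\Com_{\alg}(z_{\infty}) \geq z_{\infty} \ot z_{\infty}$ directly, via $\Com_{\alg}(w_n) \leq (z_n \ot z_n)^{\perp}$, to get unitality before the identification, whereas you deduce unitality and coassociativity afterwards from $\hat{\Com}=\Com$; both orderings are valid.
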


\begin{proof}
We use the notation of the last proposition. As $\wlg_{\infty}=z_{\infty} \alg_{\infty}$, it is enough to show that
$\Com_{\alg}(z_{\infty}) \geq z_{\infty} \otimes z_{\infty}$, so that $\hat{\Com}: \wlg_{\infty} \to \wlg_{\infty} \wot
\wlg_{\infty}$ satisfies the required conditions.

Note first that as $\Ker(\psi_n)=w_n \alg_{\infty}$, we can check that \[\Ker(\psi_n \otimes \psi_n) = (z_n \ot
z_n)^{\perp}(\alg_{\infty} \wot \alg_{\infty}).\]  The construction of the coproduct on $\alg_{\infty}$ implies that the maps
$\psi_n: \alg_{\infty} \to \QPn^{**}$ intertwine the respective coproducts (recall that $\QPn^{**}$ has a canonical Hopf--von
Neumann algebra structure induced from $\QPn$). As we have $(\psi_n \ot \psi_n) (\Com_{\alg}(w_n))= \Com_n (\psi_n(w_n))=0$, the formula
displayed  above implies that the projection $\Com_{\alg}(w_n)$ is dominated by $(z_n \ot z_n)^{\perp}$. Passing to the
limit (exploiting normality of the coproduct) we obtain that
\[ \Com_{\alg}(w_{\infty}) \leq (z_{\infty}  \ot z_{\infty})^{\perp},\]
so the proof of the first statement of the lemma is finished.

To show the second part, by the uniqueness in Theorem \ref{projcop} it suffices to show that for each $n \in\bn$ we have
\[\Com_n  \psi_n|_{\wlg_{\infty}} = (\psi_n|_{\wlg_{\infty}} \ot \psi_n|_{\wlg_{\infty}}) \hat{\Com}.\]
In fact we can even show that
\begin{equation} \label{equalcop}\Com_n  \psi_n= (\psi_n \ot \psi_n) \Com_{\alg}.\end{equation}
Indeed, as maps on both sides of the last equation are normal, it suffices to check they take the same values on each $zb_{ij}$
(where $z$ is now a central projection in $\blg^{**}$ defined in Theorem \ref{construction}). Fix then $i,j \in \bn$:
\begin{align*}
 (\psi_n \ot \psi_n)& (\Com_{\alg} (zb_{ij})) = (\psi_n \ot \psi_n) (\lim_{k\to\infty} \sum_{l=1}^k z b_{il} \ot z b_{lj}) \\&= \lim_{k\to\infty}  (\psi_n \ot \psi_n) (\sum_{l=1}^k z b_{il} \ot z b_{lj})
= \sum_{l=1}^n  \psi_n (zb_{il}) \ot \psi_n (zb_{lj}).
\end{align*}
Now it is easy to check that $\Com_n (\psi_n(zb_{ij}))= (\psi_n \ot \psi_n) (\Com(zb_{ij}))$, considering separately two cases:
first $i,j\leq n$ and then $\max\{i,j\}>n$. Thus \eqref{equalcop} is proved.
\end{proof}

Proposition  \ref{propsummand} does not exclude the possibility of $\alg_{\infty}$ actually coinciding with $\wlg_{\infty}$,
i.e.\ $z=z_{\infty}$. Below we show that this is not the case.

\begin{lem}
Let $z, z_{\infty} \in \Proj(\blg^{**})$ be the projections introduced in the proof of Proposition \ref{propsummand}. Then $z\neq z_{\infty}$.
\end{lem}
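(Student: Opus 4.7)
The plan is to produce a normal character $\chi:\blg^{**}\to\bc$ satisfying $\chi(z)=1$ and $\chi(z_\infty)=0$; this immediately forces $z\neq z_\infty$.

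Fix a bijection $\sigma:\bn\to\bn$ with infinite support, for instance the pair-swap $\sigma(2k-1)=2k$, $\sigma(2k)=2k-1$. The assignment $b_{ij}\mapsto\delta_{\sigma(i),j}\in\bc$ respects the relations \eqref{bproj}--\eqref{bprod} (injectivity of $\sigma$ gives $\delta_{\sigma(j),i}\delta_{\sigma(k),i}=0$ for $j\neq k$), so it defines a character of $\mathcal{B}$. As a character is a one-dimensional representation, it is automatically dominated by the universal $C^*$-norm and extends to a character of $\blg$, whose bidual is a normal character $\chi$ of $\blg^{**}$.

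By normality $\chi(p_i)=\lim_n\sum_{j=1}^n\delta_{\sigma(i),j}=1$, since $\sigma(i)$ is a fixed natural number; analogously $\chi(p^j)=1$, and thus $\chi(1-p_i)=\chi(1-p^j)=0$ for all $i,j\in\bn$. On the other hand, infiniteness of the support of $\sigma$ guarantees that for each $n\in\bn$ there exists $j\in\bn$ with $\sigma(j)>n$ (for the pair-swap any $j>n$ works), and then $\chi((q_j^{(n)})^{\perp})=1-\sum_{i=1}^n\delta_{\sigma(j),i}=1$. To convert these evaluations into information about the central carriers $z(1-p_i),\,z(1-p^j),\,z((q_j^{(n)})^{\perp})$ entering the definitions of $w$ and $w_n$, one uses the general fact that a normal character $\chi$ of a von Neumann algebra $\mlg$ satisfies $\chi(q)=\chi(z(q))$ for every projection $q\in\mlg$: indeed $\ker\chi$ is a $\sigma$-weakly closed two-sided ideal, hence of the form $(1-e)\mlg$ for some central projection $e$, and if $\chi(q)=0$ then $q\leq 1-e$, which by minimality forces $z(q)\leq 1-e$; the reverse case follows by applying the same reasoning to $1-q$.

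Combining these: $\chi(z(1-p_i))=\chi(z(1-p^j))=0$ for all $i,j$, so $\chi(w)=0$ and $\chi(z)=1$; similarly $\chi(z((q_j^{(n)})^{\perp}))=1$ for the $j$ chosen above, hence $\chi(w_n)=1$ and $\chi(z_n)=0$ for every $n$. Since $(z_n)$ increases to $z_\infty$ and $\chi$ is normal, $\chi(z_\infty)=0$, whereas $\chi(z)=1$, proving $z\neq z_\infty$. The only genuinely delicate point is the passage between $\chi$-values on projections and on their central carriers; the rest is routine unpacking of definitions together with normality of $\chi$.
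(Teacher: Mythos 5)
Your proof is correct, but it takes a genuinely different route from the paper's. The paper produces a normal representation $\pi$ of $\blg^{**}$ onto a factor (in fact onto $B(\hil)$), coming from a concrete, genuinely noncommutative infinite magic unitary whose first row is not finitely supported, and then uses factoriality --- every nonzero projection in a factor has central carrier $1$ --- together with $z(\pi(p))\leq \pi(z(p))$ to force $\pi(z_n)=0$ for all $n$ while $\pi(z)=1$. You instead take a one-dimensional representation: a normal character $\chi$ attached to a permutation $\sigma$ of $\bn$ with infinite support, i.e.\ a purely classical point of $\mathrm{Perm}(\bn)$ lying outside $S_\infty$. No single row or column of your magic unitary is infinitely supported (a character cannot achieve that, which is why your example is not a special case of the paper's), but the permutation as a whole escapes every finite corner, and that is exactly what yields $\chi(w_n)=1$ for every $n$ while $\chi(w)=0$. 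Your passage between $\chi$-values of projections and of their central carriers, via the weak$^*$-closed ideal $\Ker\chi$, is sound and plays the role that factoriality plays in the paper. Your approach is more elementary (no need to arrange that the projections generate $B(\hil)$) and isolates an appealing fact: already the commutative quotient of $\blg^{**}$ --- the classical distinction between $\mathrm{Perm}(\bn)$ and $S_\infty$ --- separates $z$ from $z_\infty$. The paper's approach, on the other hand, exhibits the discrepancy inside a factor representation, closer in spirit to the quantum setting.

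Two small points to tidy up, neither affecting validity. First, for the pair-swap not every $j>n$ satisfies $\sigma(j)>n$ (e.g.\ $j=n+1$ with $n+1$ even has $\sigma(j)=n$); take any odd $j>n$, or simply note that $\sigma(j)>n$ for all but finitely many $j$. Second, in your central-carrier lemma the case $\chi(q)=1$ does not follow by applying the ideal argument to $1-q$, since $z(1-q)\neq 1-z(q)$ in general; it follows directly from positivity, as $z(q)\geq q$ gives $\chi(z(q))\geq \chi(q)=1$. In fact, for the lower bound $\chi(w_n)\geq \chi\bigl((q_j^{(n)})^{\perp}\bigr)$ you need no central carriers at all, since $w_n$ dominates $(q_j^{(n)})^{\perp}$ by its very definition; the carrier lemma is only needed for the upper bound $\chi(w)=0$.
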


\begin{proof}
Observe that  another application of the argument used in Proposition \ref{propsummand} implies that
\begin{equation}
 \label{cent2}
z^{\perp}= \bigvee_{j \in \bn} z(p_j^{\perp}) \vee \bigvee_{j \in \bn} z(q_j^{\perp}),\end{equation} so the comparison of the
formulas \eqref{cent1} and \eqref{cent2} shows that the problem of deciding whether $z=z_{\infty}$ is related to the fact that
for a decreasing sequence of projections in a von Neumann algebra, say $(r_n)_{n=1}^{\infty}$ we can have $z(\lim_{n \in \bn}r_n)
\neq \lim_{n \in \bn}\; z(r_n)$.

Suppose for the moment that there exists a non-zero normal representation $\pi:\blg^{**} \to B(\hil)$ such that $\pi(z)
=1_{B(\hil)}$, $\nlg:=\pi(\blg^{**})$ is a factor, and if we write $d_{ij}=\pi(b_{ij})$ ($i,j \in \bn$) then we have
$r_k:=\sum_{j=1}^k d_{1j} \neq 1_{B(\hil)}$ for all $k \in \bn$. Then $z(r_k^{\perp})= 1_{\nlg}=1_{B(\hil)}$ (central carrier
understood in $\nlg$). As $\pi: \blg^{**}\to \nlg$ is onto (so in particular it maps $Z(\blg^{**})$ into $Z(\nlg)$), we have for
each $p \in \Proj(\blg^{**})$ the inequality $z(\pi(p)) \leq \pi(z(p))$. As $r_k^{\perp}=\pi((p_1^{(k)})^{\perp})$, we have
therefore (recall (\eqref{cent1})
\[\pi(z_k^{\perp}) \geq \pi(z((p_1^{(k)})^{\perp}))\geq z(r_k^{\perp})=1_{B(\hil)}.\]
Hence $\pi(z_k)=0$ and thus also $\pi(z_{\infty})=0$, so $z$ cannot be equal to $z_{\infty}$.

It remains to show that such a representation exists. It suffices to exhibit a concrete magic unitary $(d_{ij})_{i,j=1}^{\infty}$
built of projections on a Hilbert space $\hil$  such that each row and column sum to $1_{B(\hil)}$, $\sum_{j=1}^k d_{1j} <
1_{B(\hil)}$ for each $k \in \bn$ (in other words the first row is not `finitely supported') and the entries generate $B(\hil)$
as a von Neumann algebra. Let then $(d_n)_{n=1}^{\infty}$ be a sequence of non-zero mutually orthogonal projections summing to
$1_{B(\hil)}$ and consider the matrix:
\[ \begin{bmatrix}  d_1 & 0 & d_2 & d_3  &d_4 & \cdots \\
d_1^{\perp} & d_1 & 0 & 0 & 0 & \cdots \\
0& d_2  & d_2^{\perp} & 0 & 0 & \cdots \\
0&d_3 & 0 & d_3^{\perp}  & 0 & \cdots \\
0&d_4 & 0 & 0&  d_4^{\perp}  & \cdots \\
\vdots&\vdots & \vdots & \vdots  & \vdots & \vdots
\end{bmatrix}.
\]
It is easy to see it gives a magic unitary with the first row `infinitely supported'. The generation condition can be  achieved
by considering a finite sequence of projections $(t_n)_{n=1}^{k}$ generating the whole $B(\hil)$ and adding to a given magic
unitary two by two blocks of the form $\begin{bmatrix} t_n & t_n^{\perp} \\ t_n^{\perp}  & t_n\end{bmatrix}$ (with respective
rows and columns completed by zeros).
\end{proof}

\begin{cor}$\wlg_{\infty}$ is a proper von Neumann subalgebra of $\alg_{\infty}$.
\end{cor}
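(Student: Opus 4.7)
The plan is to read off the corollary directly from the preceding lemma together with Proposition \ref{propsummand}. Recall from the proof of Proposition \ref{propsummand} that $\wlg_{\infty}$ was identified with $z_{\infty}\alg_{\infty}$, where $z_{\infty}\in\Proj(Z(\blg^{**}))$ is a central projection satisfying $z_{\infty}\leq z$, and $\alg_{\infty}=z\blg^{**}$, so that $z$ is precisely the unit of $\alg_{\infty}$. Thus $\wlg_{\infty}$ is embedded in $\alg_{\infty}$ as a (unital in itself but non-unital inside $\alg_{\infty}$) corner, associated with the central projection $z_{\infty}$ of $\alg_{\infty}$.

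The only thing one must check to upgrade this inclusion to a proper one is that $z_{\infty}$ is strictly smaller than $1_{\alg_{\infty}}=z$. This is exactly the content of the lemma immediately preceding the corollary, which establishes $z\neq z_{\infty}$. Hence the complementary central projection $z-z_{\infty}\in\alg_{\infty}$ is non-zero and lies outside $\wlg_{\infty}=z_{\infty}\alg_{\infty}$, witnessing that the inclusion $\wlg_{\infty}\subset\alg_{\infty}$ is strict.

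I expect no real obstacle here: all the substantive work (the construction of the concrete magic unitary whose first row is infinitely supported, and the argument comparing central carriers in a factor quotient) has already been carried out in the preceding lemma. The corollary is essentially a bookkeeping step, combining the identification $\wlg_{\infty}=z_{\infty}\alg_{\infty}$ from Proposition \ref{propsummand} with the strict inequality $z_{\infty}<z$ from the lemma. One might add a one-line remark reminding the reader that, although $\wlg_{\infty}$ is a von Neumann subalgebra of $\alg_{\infty}$ in the categorical sense, its unit $z_{\infty}$ does not coincide with the unit of $\alg_{\infty}$, which is exactly why the inclusion can be proper.
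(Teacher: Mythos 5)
Your proposal is correct and is exactly the argument the paper intends: the corollary follows immediately from the identification $\wlg_{\infty}=z_{\infty}\alg_{\infty}$ (with $z_{\infty}\leq z=1_{\alg_{\infty}}$) established in Proposition \ref{propsummand} together with the strict inequality $z_{\infty}\neq z$ from the preceding lemma, which is why the paper gives no separate proof. Your closing remark about the non-coincidence of units is an accurate and harmless clarification.
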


\vspace*{0.2cm} \noindent \textbf{Acknowledgment.} Part of the work on this paper was done during the visit of the second author to ISI Kolkata in April 2010 funded by the  UKIERI
project Quantum Probability, Noncommutative Geometry and Quantum Information. The first author acknowledges the support of the project `Noncommutative Geometry and Quantum Groups' (part of Swarnajayanti Fellowship and Project) funded by Department of Science and Technology, Govt.\ of India. The second author was partially supported by National Science Center (NCN) grant no.~2011/01/B/ST1/05011.

\end{document}